\documentclass[11pt]{article}
\usepackage[margin=1.0in]{geometry}
\usepackage{graphicx}
\usepackage{enumitem}
\usepackage{amsmath,amssymb,amsfonts,amsthm}
\usepackage{algorithm,algpseudocode}
\usepackage{array,longtable,booktabs}
\usepackage{hyperref}
\usepackage{fancyhdr}

\usepackage{microtype}
\newtheorem{theorem}{Theorem}
\newtheorem{lemma}[theorem]{Lemma}
\newtheorem{proposition}[theorem]{Proposition}

\theoremstyle{definition}

\theoremstyle{remark}

\numberwithin{equation}{section}

\allowdisplaybreaks

\setlist[enumerate,1]{label=(\roman*),font=\normalfont}

\title{A path-following framework on fiber bundle for variational inequalities}

\author{
  Hongbo Sun\thanks{
  Experiment codes and results are available at \url{https://github.com/shb20tsinghua/FiberBundle_VI}.}\\
  \texttt{shb20@tsinghua.org.cn}
}

\begin{document}
\date{}
\maketitle

\begin{abstract}
  This paper proposes a path-following framework for finite-dimensional variational inequalities with arbitrary continuous functions and compact convex domains. The approach first approximately reduces a general variational inequality to a smooth variational inequality on a simplex. Its key innovation is to formulate the smooth variational inequality on a simplex on a fiber bundle called the fixed-point bundle. Exploiting this geometric structure, the framework systematically integrates starting point selection, path-following, and singularity avoidance. Without any monotonicity or similar assumptions, the algorithm guarantees global linear convergence to nonsingular solutions. For singular solutions, it maintains global linear reduction up to a prescribed precision, after which convergence becomes sublinear. Numerical experiments on 14400 randomly generated instances with dimensions up to 800 demonstrate robust performance. The algorithm converges in every tested instance, and iteration count grows only mildly with dimension.
\end{abstract}

\begin{keywords}
  variational inequality, interior point method, fiber bundle, path-following
\end{keywords}

\begin{MSCcodes}
  49J40, 90C33, 90C51, 49M29
\end{MSCcodes}

\tableofcontents

\section{Introduction}

Variational inequalities, introduced by Fichera \cite{vi_orig1} and Stampacchia \cite{vi_orig2} in the 1960s, provide a unifying framework for optimization \cite{vi_app_opt}, game theory \cite{vi_app_game}, economics \cite{vi_app_econ}, traffic assignment \cite{vi_app_traffic}, contact mechanics \cite{vi_app_contact}, fluid flow \cite{vi_app_flow}, machine learning \cite{vi_app_ml}, and many other fields \cite{vi_app}. A finite-dimensional variational inequality problem ${\rm VI}(H,K)$ consists of a nonempty compact convex set $K\subseteq\mathbb{R}^n$ and a continuous map $H:K\to\mathbb{R}^n$. The goal is to find $x^\star\in K$ such that
$$\left\langle H(x^\star),x'-x^\star\right\rangle \geq0 \quad \text{for all } x'\in K.$$
The variational inequality ${\rm VI}(H,K)$ is naturally described by the gap function
$${\rm gap}(x)=\sup_{x'\in K}\left\langle H(x),x-x'\right\rangle,$$
where ${\rm gap}(x)\geq0$ for all $x\in K$. Solutions are precisely those points $x^\star\in K$ with ${\rm gap}(x^\star)=0$.

Existence of a solution follows from the Brouwer fixed-point theorem \cite{exist_vi_fp}. However, unconditional global convergence of numerical methods remains a challenge. Most existing algorithms require monotonicity or similar structural assumptions. Those that handle non-monotone problems typically need additional conditions or lack guarantees.

Existing methods fall into several categories. Projection methods \cite{project1,project2} and proximal point methods \cite{proximal1,proximal2} reformulate the VI as a fixed-point problem and iterate projections onto $K$. Merit-function approaches \cite{merit1,merit2} convert the VI into an optimization problem using merit functions such as gap functions and then apply gradient or Newton methods. Interior point methods \cite{interior1,interior2} treat the VI as a complementarity problem and follow a central path parameterized by a barrier parameter. Homotopy methods \cite{homotopy1} deform a simple map into the target map. Operator splitting \cite{splitting1,splitting2,splitting3} handles additive decompositions. Modern variants incorporate adaptive step sizes \cite{adaptive_proximal,adaptive_extragradient}, inertial terms \cite{inertial_split,inertial_proximal}, or stochastic estimates \cite{stochastic_proximal,stochastic_proximal,stochastic_variance} for large-scale problems. However, no existing method guarantees global convergence for all continuous $H$ on compact convex $K$ without additional assumptions such as monotonicity or pseudo-monotonicity.

Among these approaches, interior point methods are particularly relevant due to their path-following philosophy. They trace a central path parameterized by a barrier parameter $\mu$, with each point on the path satisfying the perturbed KKT conditions \cite{interior_point_original}. Our approach shares this path-following spirit and is closely related to interior point methods.

The proposed approach begins by approximately reducing the general ${\rm VI}(H,K)$ to a smooth ${\rm VI}(F,\Delta)$ on a simplex $\Delta$. Any compact convex set is approximated from within by a simplex, and any continuous mapping on a simplex is smoothed to a real-analytic function via Dirichlet-kernel averaging. All subsequent developments apply to the resulting smooth problem ${\rm VI}(F,\Delta)$.

We derive three equivalent characterizations of paths solving ${\rm VI}(F,\Delta)$ via linear programming, Brouwer fixed points, and mixed complementarity problems. This yields the fixed-point KKT system (FPKKT), a simultaneous system of perturbed KKT conditions and a fixed-point equation. The paths satisfying FPKKT lead to solutions of ${\rm VI}(F,\Delta)$ and are special central paths, meaning our approach is equivalently a specialized interior point method that follows these special central paths.

Instead of solving FPKKT directly, we study the geometric structure of its solution set, which naturally forms a fiber bundle over the simplex $\Delta$ with one-dimensional fibers in $\mu$. Two sections of this bundle recover the gap function of ${\rm VI}(F,\Delta)$ and govern the differential properties of the bundle. Analytic curves on the bundle connect a chosen starting point to solutions. A parity argument implies an odd number of nonsingular solutions.

The fixed-point bundle is the central innovation. Its key insight is that the geometry separates motion of $\sigma\in\Delta$ along the base from motion of $\mu$ along fibers. The starting point of the path lies on the fiber over a designated initial $\sigma_{\rm init}\in\Delta$. Singular points on each fiber correspond to real eigenvalues of matrix function $J(\sigma)$. The bundle structure provides fiber jumps as a simple mechanism to bypass singular points while preserving the path structure.

The fixed-point bundle unifies starting point selection, path-following, and singularity avoidance in a systematic way, leading to a path $\tilde{\Gamma}$ on the bundle. We then construct a compact nonsingular neighborhood $\mathcal{N}_{\tilde{\Gamma}}(\zeta)$ of the path where the path-following algorithm is well-behaved, where $\zeta$ controls the prescribed precision at which the path-following terminates before reaching the exact solution if the solution itself is singular.

The path-following algorithm follows the predictor-corrector framework. Using bounds established on $\mathcal{N}_{\tilde{\Gamma}}(\zeta)$, we prove uniform quadratic convergence of the corrector and a uniform bound on the predictor step length. The algorithm then achieves global linear convergence to the point marked by prescribed precision $\zeta$. For nonsingular solutions, $\zeta=0$, and the algorithm is globally linearly convergent. For singular solutions, $\zeta>0$, and the algorithm retains global linear reduction up to the prescribed precision $\zeta>0$, after which the rate becomes sublinear.

We provide two different but deeply related regularized Newton correctors for practical use. Numerical experiments on 14400 randomly generated instances up to dimension 800 show that the algorithm succeeds in every case. Iteration counts grow only mildly with dimension.

The remainder of the paper is organized as follows. Section 2 details the approximation steps. Section 3 presents the three equivalent characterizations. Section 4 constructs the fixed-point bundle and studies its differential properties. Section 5 formalizes the paths on the fixed-point bundle. Section 6 develops the predictor-corrector scheme and proves convergence. Section 7 presents experimental results. Concluding remarks appear in Section 8.

For vectors, we use $a\circ b$ for componentwise multiplication and $a/b$ for componentwise division. Inequality $a\geq b$ is also interpreted componentwise. The limit $\mu\to\mathbf{0}^+$ is interpreted as $\mu\to\mathbf{0}$ with $\mu>0$. For matrix $A$, componentwise multiplication is interpreted as $A\circ b=A{\rm diag}(b)$ and $b\circ A={\rm diag}(b)A$. For norms, $\lVert a\rVert$ or $\lVert A\rVert$ always means the $L^2$ norm of vector $a$ or matrix $A$ unless otherwise specified.

\section{Approximate reduction to analytic VIs on a simplex}

The subsequent theory is derived for a variational inequality ${\rm VI}(F,\Delta)$ with a smooth mapping and a simplex domain. To place the framework in a sufficiently general setting, we first reduce a general variational inequality ${\rm VI}(H,K)$ with a continuous mapping on a compact convex domain to ${\rm VI}(F,\Delta)$. We first approximate the compact convex set from within by a simplex. Then we smooth the continuous mapping on the simplex.

\begin{description}
  \item[Approximating compact convex sets via simplices]
\end{description}

Any finite-dimensional compact convex set can be approximated by polytopes with arbitrary precision. For any $\epsilon>0$, the open cover $K\subseteq \bigcup_{x\in K} O(x,\epsilon)$ of $\epsilon$-balls admits a finite subcover $K\subseteq \bigcup_{i=1}^k O(x_i,\epsilon)$ by compactness. The convex hull $S={\rm conv}(\{x_i|1\geq i\geq k\})$ then approximates $K$ with Hausdorff distance
$$d_H(K,S)=\max\left\{\sup_{x\in K}d(x,S),\sup_{s\in S}d(s,K)\right\}\leq\max(\epsilon,0)=\epsilon.$$
This is Minkowski's theorem on inner approximation of the convex body $K$ by $S\subseteq K$ \cite{inner_approx}.

Let $X=(x_1,\dots,x_k)$ and represent any $x\in S$ as $x=X\sigma$ with $\sigma\in\Delta$. The variational inequality ${\rm VI}(H,K)$ is transformed into finding $\sigma^\star\in \Delta$ such that for all $\sigma\in \Delta$,
$$\left\langle H(x^\star),x-x^\star\right\rangle=H(X\sigma^\star)^\top(X\sigma-X\sigma^\star)=\left\langle X^\top H(X\sigma^\star),\sigma-\sigma^\star\right\rangle\geq0.$$
Thus ${\rm VI}(H,K)$ is approximated by a corresponding problem on a simplex with continuous mapping $F(\sigma)=X^\top H(X\sigma)$.

\begin{description}
  \item[Approximating continuous functions via analytic functions on simplices]
\end{description}

Next we smooth the continuous mapping $F(\sigma)=X^\top H(X\sigma)$ on the simplex. General continuous functions can be approximated by smooth functions using kernel averaging. Our domain is a simplex, so we use the Dirichlet distribution as the kernel. The Dirichlet distribution ${\rm Dir}(\alpha)$ on the simplex $\Delta$ is given by
\begin{equation}
  {\rm Dir}(y;\alpha)=\frac{1}{B(\alpha)}\prod_{i=1}^n y_i^{\alpha_i-1},\quad\alpha>0,
\end{equation}
where $y\in\Delta$ is the variable, $\alpha>0$ is a vector of concentration parameters, and $B(\alpha)$ is the Beta function. For the continuous function $F:\Delta\to\mathbb{R}^n$, the smoothed function $F_\epsilon:\Delta\to\mathbb{R}^n$ is given componentwise by
\begin{equation}
  \label{equ_mollifi}
  F_{\epsilon,i}(\sigma)=\mathbb{E}_{Y\sim{\rm Dir}(\sigma/\epsilon+c)}\left[F_i(Y)\right]=\int_\Delta F_i(y){\rm Dir}(y;\sigma/\epsilon+c)\,dy.
\end{equation}
For the $i$-th component, $F_{\epsilon,i}(\sigma)$ is the expectation of $F_i(Y)$ under the Dirichlet distribution ${\rm Dir}(\sigma/\epsilon+c)$ with concentration parameter $\sigma/\epsilon+c$. Here $\epsilon>0$ is a scalar that measures approximation error, and $c>0$ is a constant vector ensuring $\sigma/\epsilon+c>0$ for all $\sigma\in\Delta$.

The smoothing in equation~\eqref{equ_mollifi} is a mollification-type approach \cite{molli}. The Dirichlet distribution ${\rm Dir}(\sigma/\epsilon+c)$ acts as a mollifier. It is nonnegative on a compact domain, integrates to one, and its mass concentrates at $\sigma$ as $\epsilon\to0$. In mollification, the smoothed function converges uniformly to the original function, satisfying
$$\lim_{\epsilon\to0} \sup_{\sigma\in\Delta} \left\lVert F_\epsilon(\sigma)-F(\sigma)\right\rVert=0.$$

We use smoothness of different degrees in different parts of this paper. The computations in the resulting algorithm use the first derivative. The convergence of the algorithm is guaranteed by second-order continuous differentiability. However, we also derive an oddness theorem as a byproduct that relies on analyticity of the underlying mapping. We therefore prove the strongest smoothness result, namely that $F_\epsilon(\sigma)$ is analytic.

\begin{proposition}
  \label{thm_analytic}
  For every $\epsilon>0$, $F_\epsilon(\sigma)$ is real-analytic on $\Delta$.
\end{proposition}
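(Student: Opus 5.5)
The plan is to write $F_{\epsilon,i}$ as an integral whose integrand depends analytically on a complex parameter, extend it holomorphically to a complex neighbourhood of $\Delta$, and then restrict back to real $\sigma$.

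Set $\alpha(\sigma)=\sigma/\epsilon+c$. This is an affine function of $\sigma$, and $\alpha(\sigma)\geq c>0$ componentwise on $\Delta$. In fact, since $\sigma$ only needs to lie in a neighbourhood of $\Delta$, we may let $\sigma$ range over the affine hull of $\Delta$ (or over all of $\mathbb{R}^n$), subject to $\alpha(\sigma)>c/2$. Then
$$F_{\epsilon,i}(\sigma)=\frac{1}{B(\alpha(\sigma))}\int_\Delta F_i(y)\prod_{j=1}^n y_j^{\alpha_j(\sigma)-1}\,dy .$$
We treat the two factors separately. The normalizer $B(\alpha)=\prod_j\Gamma(\alpha_j)/\Gamma(\sum_j\alpha_j)$ is a ratio of Gamma functions. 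The Gamma function is holomorphic and nonvanishing for $\operatorname{Re} z>0$, so $1/B(\alpha)$ is holomorphic in $\alpha$ on $\{\operatorname{Re}\alpha>0\}$. Composing with the affine map $\alpha(\sigma)$ shows that $1/B(\alpha(\sigma))$ is real-analytic on an open set containing $\Delta$.

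The main step is the integral
$$G_i(\alpha)=\int_\Delta F_i(y)\prod_j y_j^{\alpha_j-1}\,dy, \qquad \alpha\in U:=\{\alpha\in\mathbb{C}^n:\operatorname{Re}\alpha_j>c_j/2\}.$$
We show that $G_i$ is holomorphic on $U$, using the standard theorem on holomorphic dependence of parametric integrals, which is a consequence of Morera's theorem together with Fubini. For each fixed $y$ in the relative interior of $\Delta$, the integrand $y_j^{\alpha_j-1}=e^{(\alpha_j-1)\log y_j}$ is entire in $\alpha$. It is jointly measurable, since $F$ is continuous on the compact set $\Delta$ and hence bounded by some $M$. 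On any compact subset of $U$ with $\operatorname{Re}\alpha_j\geq\delta_j>0$ we have
$$|F_i(y)\,y_j^{\alpha_j-1}|\leq M\prod_j y_j^{\delta_j-1},$$
and this dominating function is integrable on $\Delta$, because its integral is the finite quantity $B(\delta)$.

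We expect this uniform integrable domination near the boundary of $\Delta$, where $y_j\to0$ and the negative powers blow up, to be the main obstacle. It is exactly why the constant $c>0$ is needed: it keeps $\operatorname{Re}\alpha$ bounded away from $0$. With the domination in hand, holomorphy follows in two steps. First, continuity of $G_i$ follows by dominated convergence. Second, for each variable $\alpha_j$ and each triangle $T$, Fubini lets us interchange the integrals, giving
$$\oint_{\partial T} G_i\,d\alpha_j=\int_\Delta F_i(y)\oint_{\partial T}(\cdots)\,d\alpha_j\,dy=0 .$$
Morera's theorem then gives separate holomorphy in each variable, and Hartogs' theorem (or Osgood's lemma, since $G_i$ is continuous) gives joint holomorphy.

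Finally, $F_{\epsilon,i}(\sigma)=G_i(\alpha(\sigma))/B(\alpha(\sigma))$ is a product of holomorphic functions composed with an affine map. Its restriction to real $\sigma$ is therefore real-analytic on an open neighbourhood of $\Delta$ in the affine hull, which proves Proposition~\ref{thm_analytic}.
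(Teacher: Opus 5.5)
Your proof is correct and follows essentially the same route as the paper: you pass to a complex neighbourhood on which $\operatorname{Re}(z/\epsilon+c)$ stays bounded away from $0$ (exactly where $c>0$ is used), dominate the integrand by a constant times $\prod_j y_j^{\delta_j-1}$ (integrable as a Dirichlet--Beta integral), and invoke the holomorphic parametric-integral theorem. The differences are cosmetic: you factor $1/B(\alpha)$ out of the integral and treat it separately instead of bounding it by compactness inside the integrand, and you sketch the Morera--Fubini--Osgood proof of the parametric theorem that the paper simply cites.
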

\begin{proof}
  A standard theorem on holomorphic parametric integrals in complex analysis can be used to prove this \cite{analytic}. Specifically, for a complex function $f(z,y)$ with $z\in Z$ and $y\in Y$, if $f(z,y)$ is holomorphic with respect to $z$ and uniformly integrable with respect to $y$, then the parametric integral $h(z)=\int_Y f(z,y) \,dy$ is holomorphic on $Z$. Uniform integrability means there exists an integrable function $g(y)$ such that $\lvert f(z,y)\rvert<g(y)$ for every $z\in Z$.

  In our case, we study the complex parametric function $F_i(y){\rm Dir}(y;z/\epsilon+c)$ with $y\in\Delta$ and $z\in U_\sigma$. Here $U_\sigma=\{z\in\mathbb{C}^n|\lVert z-\sigma\rVert\leq d\}$ is a complex neighborhood of $\sigma\in\Delta$. We require that ${\rm Re}(z/\epsilon+c)\geq m>0$ for every $z\in U_\sigma$. Such a pair $d$ and $m$ exists for every $\sigma\in\Delta$ because $c>0$. We then show that $F_i(y){\rm Dir}(y;z/\epsilon+c)$ is holomorphic in $z$ and uniformly integrable in $y$.

  For holomorphy, the Beta function $B(\alpha)$ is known to be holomorphic and nonzero on $\{\alpha\in\mathbb{C}^n|{\rm Re}(\alpha)>0\}$. Hence $1/B(z/\epsilon+c)$ is holomorphic on $U_\sigma$ since ${\rm Re}(z/\epsilon+c)\geq m>0$. The term $\prod_{i=1}^n y_i^{z_i/\epsilon+c_i-1}$ is holomorphic in $z$ for every $y>0$ because it is an exponential function. Thus $F_i(y){\rm Dir}(y;z/\epsilon+c)$ is holomorphic in $z$ on $U_\sigma$ for every $y>0$. The set where $\min_i y_i=0$ has measure zero and does not affect the integral.

  For uniform integrability, $1/B(z/\epsilon+c)$ is continuous on the compact complex neighborhood $U_\sigma$, so it attains an upper bound $\lvert 1/B(z/\epsilon+c)\rvert<C_1$. The function $F_i(y)$ is continuous on the compact simplex $\Delta$, so it attains an upper bound $\lvert F_i(y)\rvert<C_2$. Then
  $$\left\lvert F_i(y){\rm Dir}(y;z/\epsilon+c)\right\rvert\leq C_1C_2\left\lvert \prod_{i=1}^n y_i^{z_i/\epsilon+c_i-1}\right\rvert\leq C_1C_2\prod_{i=1}^n y_i^{{\rm Re}(z_i/\epsilon+c_i)-1}\leq C_1C_2\prod_{i=1}^n y_i^{m-1}.$$
  The function $\prod_{i=1}^n y_i^{m-1}$ is integrable on $\Delta$ as the Dirichlet multivariate Beta integral. Hence $F_i(y){\rm Dir}(y;z/\epsilon+c)$ is uniformly integrable in $y$ on $\Delta$.

  Therefore, by the standard holomorphic parametric integral theorem, $F_{\epsilon,i}(z)$ is holomorphic on the complex neighborhood $U_\sigma$ of every $\sigma\in\Delta$. It follows that $F_{\epsilon,i}(\sigma)$ is real-analytic on $\sigma\in\Delta$. The vector function $F_\epsilon(\sigma)$ is real-analytic on $\sigma\in\Delta$ because each of its components is real-analytic.
\end{proof}

The computations in the algorithm rely on the derivative of $F_\epsilon(\sigma)$. In subsequent derivations, we directly assume that $F(\sigma)$ is smooth and use $\partial F(\sigma)/\partial\sigma$ in the computation. Although we do not use $F_\epsilon^\prime(\sigma)$ directly in this paper, we provide its expression for completeness. For any distribution ${\rm D}(\alpha)$ and function $f(y)$, under some regularity conditions there is a standard result called the score function identity \cite{score_identity}
\begin{equation*}
  \frac{\partial}{\partial\alpha}\mathbb{E}_{Y\sim{\rm D}(\alpha)}\left[f(Y)\right]={\rm Cov}_{Y\sim{\rm D}(\alpha)}\left[f(Y),S(Y;\alpha)\right],
\end{equation*}
where $S(Y;\alpha)=\partial\log{\rm D}(Y;\alpha)/\partial\alpha$ is the score function of the distribution ${\rm D}(\alpha)$ and ${\rm Cov}_{Y\sim{\rm D}(\alpha)}$ denotes covariance. Thus the derivative $F'_\epsilon(\sigma)$ is derived as
\begin{equation}
  \frac{\partial F_{\epsilon,i}(\sigma)}{\partial\sigma_j}=\frac{\partial\alpha_j}{\partial\sigma_j}\frac{\partial}{\partial\alpha_j}\mathbb{E}_{Y\sim{\rm Dir}(\alpha)}\left[F_i(Y)\right]=\frac{1}{\epsilon}{\rm Cov}_{Y\sim{\rm Dir}(\alpha)}\left[F_i(Y),\log Y_j\right].
\end{equation}

\begin{description}
  \item[Approximation error]
\end{description}

To conclude the approximate reduction, we derive the approximation error of ${\rm VI}(F_\epsilon,\Delta)$ to ${\rm VI}(H,K)$. For any $\sigma\in\Delta$, the gap of ${\rm VI}(F_\epsilon,\Delta)$ approximates the gap of ${\rm VI}(H,K)$.

Let $x^{\prime\star}=\arg\max_{x'\in K}\langle H(X\sigma),X\sigma-x'\rangle$. Then there exists $\sigma^{\prime\prime\star}\in\Delta$ such that $\lVert X\sigma^{\prime\prime\star}-x^{\prime\star}\rVert<\epsilon_{\rm set}$ by the inner approximation. Hence
\begin{align*}
       & \sup_{x'\in K}\left\langle H(X\sigma),X\sigma-x'\right\rangle-\sup_{\sigma'\in\Delta}\left\langle F_\epsilon(\sigma),\sigma-\sigma'\right\rangle\leq\left\langle H(X\sigma),X\sigma-x^{\prime\star}\right\rangle-\left\langle F_\epsilon(\sigma),\sigma-\sigma^{\prime\prime\star}\right\rangle \\
  =    & \left\langle H(X\sigma),X\sigma^{\prime\prime\star}-x^{\prime\star}\right\rangle+\left\langle X^\top H(X\sigma)-F_\epsilon(\sigma),\sigma-\sigma^{\prime\prime\star}\right\rangle                                                                                                               \\
  \leq & \left\lVert H(X\sigma)\right\rVert \left\lVert X\sigma^{\prime\prime\star}-x^{\prime\star}\right\rVert+2\left\lVert X^\top H(X\sigma)-F_\epsilon(\sigma)\right\rVert\leq\left\lVert H(X\sigma)\right\rVert \epsilon_{\rm set}+2\epsilon_{\rm map}.
\end{align*}

Conversely, let $\sigma^{\prime\star}=\arg\max_{\sigma'\in \Delta}\langle F_\epsilon(\sigma),\sigma-\sigma'\rangle$. Then there exists $X\sigma^{\prime\star}\in K$ by the inner approximation. Hence
\begin{align*}
    & \sup_{x'\in K}\left\langle H(X\sigma),X\sigma-x'\right\rangle-\sup_{\sigma'\in\Delta}\left\langle F_\epsilon(\sigma),\sigma-\sigma'\right\rangle\geq\left\langle H(X\sigma),X\sigma-X\sigma^{\prime\star}\right\rangle-\left\langle F_\epsilon(\sigma),\sigma-\sigma^{\prime\star}\right\rangle \\
  = & \left\langle X^\top H(X\sigma)-F_\epsilon(\sigma),\sigma-\sigma^{\prime\star}\right\rangle\geq2\left\lVert X^\top H(X\sigma)-F_\epsilon(\sigma)\right\rVert\geq-2\epsilon_{\rm map}.
\end{align*}

The coefficient $\left\lVert H(X\sigma)\right\rVert$ is bounded by continuity on $\Delta$. Therefore the approximation error between the gaps of ${\rm VI}(F_\epsilon,\Delta)$ and ${\rm VI}(H,K)$ is bounded by the set approximation error and the function approximation error.

\section{Three equivalent characterizations regarding the paths}

The remainder of the paper studies ${\rm VI}(F,\Delta)$ with a smooth function $F$ and a simplex domain $\Delta$. The approximate reduction is necessary because ${\rm VI}(F,\Delta)$ yields a particular equation system called the fixed-point KKT system (FPKKT), which is the equation of the path we intend to follow. This section develops three equivalent characterizations of FPKKT, and hence of the paths. Each reveals a different aspect of the framework. The linear programming form derives FPKKT from ${\rm VI}(F,\Delta)$. The Brouwer formulation establishes existence of solutions of FPKKT. The MCP formulation connects FPKKT to the central path in interior point methods.

\subsection{Linear programming form and path equation}

To apply path-following methods, we seek an equation whose solutions coincide with those of ${\rm VI}(F,\Delta)$. We begin by introducing an optimization problem associated with ${\rm VI}(F,\Delta)$. Recall that ${\rm VI}(F,\Delta)$ equivalently requires $\sigma^\star\in\Delta$ such that $\sigma^{\star\top} F(\sigma^\star)\leq\sigma^{\prime\top} F(\sigma^\star)$ for all $\sigma'\in\Delta$. Equation~\eqref{equ_lp} shows a parameterized linear programming problem (LP) that resembles this requirement. Here $\hat{\sigma}$ is the optimization variable and $\sigma$ is the parameter.
\begin{equation}
  \label{equ_lp}
  \begin{aligned}
    \min_{\hat{\sigma}} \quad & \hat{\sigma}^\top F(\sigma)   \\
    \textrm{s.t.}\quad        & \mathbf{1}^\top\hat{\sigma}=1 \\
                              & \hat{\sigma}\geq0             \\
  \end{aligned}
\end{equation}
The Lagrangian function of LP~\eqref{equ_lp} is
$$L=\hat{\sigma}^\top F(\sigma)+v(\mathbf{1}^\top\hat{\sigma}-1)-r^\top\hat{\sigma}=\hat{\sigma}^\top(F(\sigma)+v\mathbf{1}-r)-v,$$
where $r\geq0$ and $v$ are Lagrangian multipliers. The $\mu=0$ case of equation~\eqref{equ_kkt} together with $r\geq0$ constitutes the KKT conditions of LP~\eqref{equ_lp}. We additionally require the unbiased condition $\hat{\sigma}=\sigma$ to capture the requirement of ${\rm VI}(F,\Delta)$. We refer to the simultaneous equation~\eqref{equ_fpkkt} as the fixed-point KKT system (FPKKT).
\begin{subequations}
  \label{equ_fpkkt}
  \begin{equation}
    \label{equ_kkt}
    \begin{bmatrix}
      \hat{\sigma}\circ r-\mu       \\
      r-F(\sigma)-v\mathbf{1}       \\
      \mathbf{1}^\top\hat{\sigma}-1 \\
    \end{bmatrix}=0
  \end{equation}
  \begin{equation}
    \hat{\sigma}=\sigma
  \end{equation}
\end{subequations}

The $\mu\geq0$ case of equation~\eqref{equ_kkt} are the perturbed KKT conditions of LP~\eqref{equ_lp}, where $\mu$ is a vector. Unlike the standard KKT conditions, we do not require $r\geq0$ explicitly because $\mu\geq0$ already implies it if $\hat{\sigma}\in\Delta$.

LP~\eqref{equ_lp} and FPKKT~\eqref{equ_fpkkt} equivalently characterize ${\rm VI}(F,\Delta)$, as Theorem~\ref{thm_fpkkt} shows. This theorem establishes the equivalence between solutions of FPKKT~\eqref{equ_fpkkt} and ${\rm VI}(F,\Delta)$ for $\mu=0$. If solutions of FPKKT~\eqref{equ_fpkkt} exist at every $\mu>0$, they form a path leading to solutions of ${\rm VI}(F,\Delta)$, analogous to the central path.
\begin{theorem}
  \label{thm_fpkkt}
  For $\sigma^\star\in\Delta$, the following statements are equivalent.
  \begin{enumerate}
    \item $\sigma^\star$ is a solution of ${\rm VI}(F,\Delta)$.
    \item $\sigma^\star$ is an optimizer of LP~\eqref{equ_lp} parameterized by $\sigma^\star$.
    \item $\sigma^\star$ satisfies the $\mu=0$ case of FPKKT~\eqref{equ_fpkkt} for some $v$ and $r\geq0$.
  \end{enumerate}
\end{theorem}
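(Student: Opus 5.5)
The plan is to prove the cycle (i)$\Leftrightarrow$(ii) directly from the definitions, and then (ii)$\Leftrightarrow$(iii) via LP duality, i.e.\ the KKT conditions of LP~\eqref{equ_lp}. These KKT conditions are necessary and sufficient because the problem is linear with a nonempty compact feasible set $\Delta$, so no constraint qualification issue arises.

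For (i)$\Leftrightarrow$(ii), fix the parameter $\sigma=\sigma^\star$ in LP~\eqref{equ_lp}. The objective is then the linear function $\hat{\sigma}\mapsto\hat{\sigma}^\top F(\sigma^\star)$ over $\Delta$. The statement that $\sigma^\star$ minimizes it reads $\sigma^{\star\top}F(\sigma^\star)\leq\sigma'^\top F(\sigma^\star)$ for all $\sigma'\in\Delta$, which is literally $\langle F(\sigma^\star),\sigma'-\sigma^\star\rangle\geq0$ for all $\sigma'\in\Delta$, i.e.\ ${\rm VI}(F,\Delta)$. This step is a rewriting.

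For (ii)$\Rightarrow$(iii), let $v=-\min_i F_i(\sigma^\star)$ and $r=F(\sigma^\star)+v\mathbf{1}$, so $r\geq0$ by construction. The LP minimum value equals $\min_i F_i(\sigma^\star)$, since a linear function on the simplex is minimized at a vertex. Optimality of $\sigma^\star$ therefore gives $\sigma^{\star\top}F(\sigma^\star)=\min_iF_i(\sigma^\star)=-v$. It follows that $\sigma^{\star\top}r=\sigma^{\star\top}F(\sigma^\star)+v\,\mathbf{1}^\top\sigma^\star=0$. Each term $\sigma^\star_i r_i$ is nonnegative and the terms sum to zero, so $\sigma^\star\circ r=0$. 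Taking $\hat{\sigma}=\sigma^\star$, the $\mu=0$ case of~\eqref{equ_kkt} holds, the constraint $\mathbf{1}^\top\hat\sigma=1$ holds because $\sigma^\star\in\Delta$, and the unbiased condition $\hat{\sigma}=\sigma$ holds.

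For (iii)$\Rightarrow$(ii), take $\hat\sigma=\sigma=\sigma^\star$ together with $v$ and $r\geq0$ satisfying FPKKT at $\mu=0$. For any $\sigma'\in\Delta$,
$$\sigma'^\top F(\sigma^\star)=\sigma'^\top(r-v\mathbf{1})=\sigma'^\top r-v\geq -v=\sigma^{\star\top}(r-v\mathbf{1})=\sigma^{\star\top}F(\sigma^\star),$$
where the inequality uses $\sigma'\geq0$ and $r\geq0$, and the next equality uses $\sigma^{\star\top}r=0$, which follows from $\sigma^\star\circ r=0$. Hence $\sigma^\star$ is an optimizer, which is weak duality made explicit.

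None of these steps is a real obstacle. The only point requiring care is in (ii)$\Rightarrow$(iii): the multipliers must be constructed explicitly rather than cited from a general KKT theorem. The explicit construction avoids any appeal to constraint qualifications, and the componentwise complementarity must be deduced from the vanishing sum of nonnegative terms.
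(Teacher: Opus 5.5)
Your proof is correct and takes essentially the paper's approach: (i)$\Leftrightarrow$(ii) is the same direct rewriting, and (ii)$\Leftrightarrow$(iii) is the KKT characterization of optimality for LP~\eqref{equ_lp} evaluated at $\hat{\sigma}=\sigma=\sigma^\star$. The paper simply cites the fact that KKT conditions are necessary and sufficient for a linear program, whereas you verify this by hand via the multipliers $v=-\min_i F_i(\sigma^\star)$ and $r=F(\sigma^\star)+v\mathbf{1}$ and an explicit weak-duality inequality; this makes your argument self-contained, but it is a convenience rather than a necessity, since citing the general result is equally valid (linear constraints need no constraint qualification).
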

\begin{proof}
  ${\rm (i)}\Leftrightarrow{\rm (ii)}$:
  By definition, (i) is equivalent to $\langle F(\sigma^\star),\sigma'-\sigma^\star\rangle \geq0$ for all $\sigma'\in\Delta$. (ii) is equivalent to $\sigma^{\star\top} F(\sigma^\star)\leq\sigma^{\prime\top} F(\sigma^\star)$ for all $\sigma'\in\Delta$. The equivalence follows.

  ${\rm (ii)}\Leftrightarrow{\rm (iii)}$:
  For every parameter $\sigma\in\Delta$, LP~\eqref{equ_lp} is a linear programming problem. The point $\hat{\sigma}$ is an optimizer of LP~\eqref{equ_lp} if and only if it satisfies the KKT conditions, which are given by equation~\eqref{equ_kkt} and $r\geq0$. Substituting $\hat{\sigma}=\sigma=\sigma^\star$ yields the equivalence.
\end{proof}

\subsection{Brouwer fixed-point form and path existence}

Having established the equivalence for $\mu=0$, we next address the existence of paths as $\mu\to\mathbf{0}^+$. For path-following, it is essential that FPKKT~\eqref{equ_fpkkt} admits solutions for every $\mu>0$. We therefore study the $\mu>0$ case.

The correspondence from $(\sigma,\mu)$ to $(\hat{\sigma},r,v)$ satisfying the perturbed KKT conditions~\eqref{equ_kkt} is denoted the Brouwer function
\begin{equation}
  (\hat{\sigma},r,v)=M(\sigma,\mu),\quad{\rm s.t.~perturbed~KKT~conditions~\eqref{equ_kkt}},(\sigma,\mu)\in\Delta\times\{\mu|\mu>0\}.
\end{equation}
The following proposition holds.
\begin{proposition}
  \label{thm_brouwer_continuous}
  $(\hat{\sigma},r,v)=M(\sigma,\mu)$ is a continuous function on $\Delta\times\{\mu|\mu>0\}$.
\end{proposition}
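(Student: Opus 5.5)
The plan is to make $M$ explicit as a well-defined single-valued map, then obtain continuity from the implicit function theorem, or alternatively from a direct monotonicity argument. Fix $(\sigma,\mu)$ with $\mu>0$. Substituting $r=F(\sigma)+v\mathbf{1}$ into the first block of \eqref{equ_kkt} gives
$$\hat{\sigma}_i=\frac{\mu_i}{F_i(\sigma)+v},$$
and the normalization becomes the scalar equation
$$\phi(v;\sigma,\mu):=\sum_{i=1}^n\frac{\mu_i}{F_i(\sigma)+v}=1.$$
The first step is to show that this equation has exactly one root $v$ with $F_i(\sigma)+v>0$ for all $i$. That branch is the one selected by $\hat{\sigma}\in\Delta$, and hence by $r>0$, as the paper notes after \eqref{equ_fpkkt}. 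On the interval $v\in(-\min_i F_i(\sigma),\infty)$, the function $\phi$ is continuous and strictly decreasing. It tends to $+\infty$ as $v\downarrow-\min_iF_i(\sigma)$ because some $\mu_i>0$ sits over a vanishing denominator, and it tends to $0$ as $v\to\infty$. So a unique root $v(\sigma,\mu)$ exists, and $\hat{\sigma},r$ are then determined by the formulas above. This shows $M$ is a single-valued function, provided we interpret $M$ as taking values with $\hat{\sigma}\in\Delta$ (equivalently $r>0$). I would state this restriction explicitly, since other branches with $F_i+v<0$ can also solve \eqref{equ_kkt}.

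The second step is continuity of $v(\sigma,\mu)$. I would apply the implicit function theorem to $G(v,\sigma,\mu)=\phi(v;\sigma,\mu)-1$. This $G$ is continuous in $(\sigma,\mu)$ because $F$ is continuous, and $C^1$ in $v$ on the admissible domain, with
$$\partial_vG=-\sum_i\frac{\mu_i}{(F_i(\sigma)+v)^2}<0.$$
If $F$ is smooth, as assumed from this section on, the standard implicit function theorem gives $v$ as $C^1$, in particular continuous. To require only continuity of $F$, I would instead argue directly. Take $(\sigma_k,\mu_k)\to(\sigma,\mu)$. For any $\delta>0$, strict monotonicity gives $\phi(v-\delta;\sigma,\mu)>1>\phi(v+\delta;\sigma,\mu)$. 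By continuity of $\phi$ in the parameters at the fixed points $v\pm\delta$, these inequalities persist for large $k$, so $|v_k-v|<\delta$.

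One needs $v-\delta$ to stay in the admissible region for the perturbed parameters. This is the main technical point I expect: the domain boundary $-\min_iF_i(\sigma)$ itself moves with $\sigma$. It is handled by choosing $\delta$ smaller than half the gap $\min_i(F_i(\sigma)+v)>0$ and using continuity of $F$ to keep $F_i(\sigma_k)+v-\delta$ bounded away from zero.

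Finally, $\hat{\sigma}=\mu/(F(\sigma)+v\mathbf{1})$ and $r=F(\sigma)+v\mathbf{1}$ are compositions of continuous maps with denominators bounded away from zero locally. Hence $M$ is continuous on $\Delta\times\{\mu\mid\mu>0\}$.
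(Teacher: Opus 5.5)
Your proof is correct. Its first step, strict monotonicity of $q(v)=\mathbf{1}^\top\bigl(\mu/(F(\sigma)+v\mathbf{1})\bigr)-1$ on $(-\min_i F_i(\sigma),+\infty)$ running from $+\infty$ to $-1$, is exactly the paper's. The continuity step takes a different route.

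\begin{itemize}
\item The paper traps the root in the bounded interval $[-\min_i F_i(\sigma),-\min_i F_i(\sigma)+\mathbf{1}^\top\mu]$. It then extracts a convergent subsequence of $v_k$, passes to the limit in $q(v_k)=0$, and uses uniqueness to get convergence of the whole sequence.
\item You bracket the root directly by the strict inequalities $\phi(v-\delta)>1>\phi(v+\delta)$. These persist under perturbation of $(\sigma,\mu)$ once $\delta<\tfrac12\min_i(F_i(\sigma)+v)$ keeps $v-\delta$ admissible.
\end{itemize}

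The paper's route produces, as a by-product, the explicit bracket that becomes the bisection interval \eqref{equ_bisection} of Algorithm~\ref{algo}. However, its limit passage tacitly needs the subsequential limit $v_0$ to satisfy $F(\sigma_0)+v_0\mathbf{1}>0$. This is true, since otherwise the term of $q$ at a minimizing index blows up, but the paper does not state it. Your argument handles the moving boundary $-\min_i F_i(\sigma)$ explicitly and needs no a priori bound.

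Your remark that \eqref{equ_kkt} by itself admits further roots with some $F_i(\sigma)+v<0$ is also useful. It means $M$ must be read on the branch $r>0$, equivalently $\hat{\sigma}\in\Delta$, and this makes explicit a restriction the paper uses silently.

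Finally, your implicit-function-theorem variant gives more than continuity. For $F$ of class $C^1$ it gives $C^1$ dependence of $(\hat{\sigma},r,v)$ on $(\sigma,\mu)$. The paper implicitly relies on this when it differentiates $r$ and $v$ through $M$ to derive the barrier gradient \eqref{equ_grad}.
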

\begin{proof}
  The correspondence is single-valued. For every $(\sigma,\mu)$, the corresponding $v$ is the unique root of $q(v)=0$, where
  $$q(v)=\mathbf{1}^\top\frac{\mu}{F(\sigma)+v\mathbf{1}}-1,\quad \frac{dq(v)}{dv}=-\mathbf{1}^\top\frac{\mu}{\left(F(\sigma)+v\mathbf{1}\right)^2}.$$
  Indeed, $dq(v)/dv<0$. The limiting behavior is
  $$\lim_{v\to \left(-\min_i F_i(\sigma)\right)^-}q(v)=+\infty \quad \text{and} \quad \lim_{v\to \infty}q(v)=-1.$$
  Thus $q(v)$ decreases monotonically from $+\infty$ to $-1$ on $(-\min_i F_i(\sigma),+\infty)$. Consequently, $v$ is unique for every $(\sigma,\mu)$, hence so is $(\hat{\sigma},r,v)$.

  Moreover, since $q(-\min_i F_i(\sigma)+\mathbf{1}^\top\mu)<0$, the unique solution $v$ belongs to the interval $[-\min_i F_i(\sigma),-\min_i F_i(\sigma)+\mathbf{1}^\top\mu]$.

  Let $(\sigma_k,\mu_k)\to(\sigma_0,\mu_0)$ with $(\sigma_k,\mu_k)\in\Delta\times\{\mu|\mu>0\}$ and $(\sigma_0,\mu_0)\in\Delta\times\{\mu|\mu>0\}$, and let $v_k$ be the unique solution of $q(v)=0$ for each $k$. Since each $v_k$ lies in the bounded interval above, the sequence $\{v_k\}$ has a convergent subsequence with limit point $v_0$. By continuity of $F(\sigma)$ and of $q(\sigma,\mu,v)$ in $(\sigma,\mu,v)$, we have
  $$\mathbf{1}^\top\frac{\mu_0}{F(\sigma_0)+v_0\mathbf{1}}-1=0.$$
  Thus $v_0$ is the unique solution of $q(v)=0$ for $(\sigma_0,\mu_0)$. Since every convergent subsequence of $\{v_k\}$ converges to this same unique limit $v_0$, the entire sequence $\{v_k\}$ converges to $v_0$. Hence $v$ is continuous with respect to $(\sigma,\mu)$ on $\Delta\times\{\mu|\mu>0\}$. Continuity of $r=F(\sigma)+v\mathbf{1}$ and $\hat{\sigma}=\mu/r$ then yields continuity of $(\hat{\sigma},r,v)=M(\sigma,\mu)$.
\end{proof}

By the Brouwer fixed-point theorem, for every $\mu>0$, there exists a fixed point $\sigma=\hat{\sigma}$ of the Brouwer function $(\hat{\sigma},r,v)=M(\sigma,\mu)$. Fixed points of $(\hat{\sigma},r,v)=M(\sigma,\mu)$ are equivalent to solutions of FPKKT~\eqref{equ_fpkkt} by definition of the Brouwer function. Theorem~\ref{thm_exist} therefore guarantees the existence of paths subject to FPKKT~\eqref{equ_fpkkt} as $\mu\to\mathbf{0}^+$.
\begin{theorem}[Existence theorem]
  \label{thm_exist}
  For any $\mu>0$, there exists $\sigma\in\Delta$ satisfying FPKKT~\eqref{equ_fpkkt}.
\end{theorem}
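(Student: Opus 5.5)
The plan is to fix an arbitrary $\mu>0$ and apply the Brouwer fixed-point theorem to the map $\Phi_\mu:\Delta\to\Delta$, $\Phi_\mu(\sigma)=\hat{\sigma}$, where $(\hat{\sigma},r,v)=M(\sigma,\mu)$ is the Brouwer function. Any fixed point $\sigma=\Phi_\mu(\sigma)$ then satisfies the perturbed KKT conditions~\eqref{equ_kkt} together with $\hat{\sigma}=\sigma$, which is exactly FPKKT~\eqref{equ_fpkkt}. So it suffices to verify the hypotheses of Brouwer's theorem: $\Delta$ is a nonempty compact convex set, $\Phi_\mu$ maps $\Delta$ into $\Delta$, and $\Phi_\mu$ is continuous.

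First, well-definedness and the self-map property. By the proof of Proposition~\ref{thm_brouwer_continuous}, for each $\sigma\in\Delta$ there is a unique $v\in(-\min_i F_i(\sigma),+\infty)$ with $q(v)=0$. Setting $r=F(\sigma)+v\mathbf{1}$ gives $r>0$ componentwise, so $\hat{\sigma}=\mu/r$ is well defined and $\hat{\sigma}>0$ because $\mu>0$. The condition $q(v)=0$ reads $\mathbf{1}^\top\hat{\sigma}=1$. Hence $\hat{\sigma}\in\Delta$, and indeed it lies in the relative interior. Second, continuity: Proposition~\ref{thm_brouwer_continuous} gives continuity of $M$ on $\Delta\times\{\mu|\mu>0\}$, so its restriction to the slice of fixed $\mu$, composed with projection onto the $\hat{\sigma}$-component, is continuous on $\Delta$.

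Brouwer's theorem then yields $\sigma\in\Delta$ with $\Phi_\mu(\sigma)=\sigma$. The corresponding triple $(\sigma,r,v)$ satisfies all rows of~\eqref{equ_kkt} with $\hat{\sigma}=\sigma$, completing the argument. I would also remark that the fixed point automatically satisfies $\sigma>0$ and $r>0$.

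The main obstacle would normally be continuity of $\sigma\mapsto\hat{\sigma}$, since $v$ is defined only implicitly. Here it is already settled by Proposition~\ref{thm_brouwer_continuous}. The only remaining point to check carefully is that the root $v$ lies strictly above $-\min_i F_i(\sigma)$, so that $r>0$ and the division $\mu/r$ is legitimate. This follows from $q\to+\infty$ at the left endpoint and strict monotonicity of $q$.
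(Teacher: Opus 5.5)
Your proposal is correct and follows the paper's argument: apply Brouwer to $\sigma\mapsto\hat{\sigma}$ from $M(\cdot,\mu)$, get continuity from Proposition~\ref{thm_brouwer_continuous}, and note that fixed points are exactly solutions of FPKKT. You also spell out the self-map property $\hat{\sigma}\in\Delta$ (via $r>0$ and $q(v)=0$), which the paper leaves implicit.
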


The proof of Theorem~\ref{thm_exist} also suggests that the Brouwer function can be computed by solving the bisection problem~\eqref{equ_bisection}.
\begin{equation}
  \label{equ_bisection}
  q(v)=\mathbf{1}^\top\frac{\mu}{F(\sigma)+v\mathbf{1}}-1,\quad v\in[-\min_i F_i(\sigma),-\min_i F_i(\sigma)+\mathbf{1}^\top\mu]
\end{equation}

\subsection{Mixed complementarity problem form and central path}

Having established the existence of paths satisfying FPKKT~\eqref{equ_fpkkt}, we now show that FPKKT~\eqref{equ_fpkkt} can be interpreted as the equation of a distinguished central path of the mixed complementarity problem (MCP) in equation~\eqref{equ_mcp}. Existing research has established deep connections between complementarity problems, variational inequalities, and Brouwer fixed-point problems \cite{red_vi_fp,dual_gap,bepm_game}.
\begin{equation}
  \label{equ_mcp}
  \begin{aligned}
    \min_{(\sigma,r,v)} \quad & \sigma^\top r-\mu^\top\ln\sigma-\mu^\top \ln r \\
    \textrm{s.t.}\quad        & r=F(\sigma)+v\mathbf{1}                        \\
                              & \mathbf{1}^\top\sigma=1                        \\
                              & (\sigma,r)\geq0                                \\
  \end{aligned}
\end{equation}

The $\mu=0$ case of equation~\eqref{equ_mcp} is obtained by taking the complementarity pair $\sigma\circ r$ from FPKKT~\eqref{equ_fpkkt} as the objective. The $\mu>0$ case is the barrier problem of the MCP. The Lagrangian function of MCP~\eqref{equ_mcp} is
$$L=\sigma^\top r+\bar{\lambda}^\top(r-F(\sigma)-v\mathbf{1})+\tilde{\lambda}(\mathbf{1}^\top\sigma-1)-\check{r}^\top\sigma-\check{\sigma}^\top r,$$
where $\bar{\lambda}$, $\tilde{\lambda}$, $\check{\sigma}\geq0$, $\check{r}\geq0$ are Lagrangian multipliers. The perturbed KKT conditions of MCP~\eqref{equ_mcp} are
\begin{equation}
  \label{equ_kkt_mcp}
  \begin{bmatrix}
    -\left(\partial F(\sigma)/\partial\sigma\right)^\top\bar{\lambda}+\tilde{\lambda}\mathbf{1}+r-\check{r} \\
    \bar{\lambda}+\sigma-\check{\sigma}                                                                     \\
    -\bar{\lambda}^\top\mathbf{1}                                                                           \\
    r\circ \check{\sigma}-\mu                                                                               \\
    \sigma\circ \check{r}-\mu                                                                               \\
    r-F(\sigma)-v\mathbf{1}                                                                                 \\
    \mathbf{1}^\top\sigma-1
  \end{bmatrix}=0.
\end{equation}

Theorem~\ref{thm_limit} shows that the three characterizations are identical representations of the same underlying object.
\begin{theorem}
  \label{thm_limit}
  For $(\sigma,\mu)\in\Delta\times\{\mu|\mu>0\}$, the following statements are equivalent.
  \begin{enumerate}
    \item $(\sigma,\mu)$ satisfies FPKKT~\eqref{equ_fpkkt} for some $r,v$.
    \item $\sigma$ is a fixed point such that $\sigma=\hat{\sigma}$ of the Brouwer function $(\hat{\sigma},r,v)=M(\sigma,\mu)$.
    \item $(\sigma,\mu)$ satisfies $\check{\sigma}=\sigma$ and the perturbed KKT conditions of MCP~\eqref{equ_mcp} for some $r,v,\bar{\lambda},\tilde{\lambda},\check{\sigma},\check{r}$.
  \end{enumerate}
\end{theorem}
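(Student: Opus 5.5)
The plan is to prove (i)$\Leftrightarrow$(ii) directly from the definition of the Brouwer function, and then (i)$\Leftrightarrow$(iii) by exhibiting an explicit choice of the MCP multipliers.

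For (i)$\Leftrightarrow$(ii), Proposition~\ref{thm_brouwer_continuous} shows that for every $(\sigma,\mu)$ with $\mu>0$ the perturbed KKT conditions~\eqref{equ_kkt} have exactly one solution $(\hat{\sigma},r,v)=M(\sigma,\mu)$. Suppose $(\sigma,\mu)$ satisfies FPKKT~\eqref{equ_fpkkt} with some $r,v$. Then $(\sigma,r,v)$ solves \eqref{equ_kkt} with $\hat{\sigma}=\sigma$. By uniqueness, $M(\sigma,\mu)=(\sigma,r,v)$, so $\sigma$ is a fixed point. Conversely, a fixed point $\sigma=\hat{\sigma}$ gives $(r,v)$ satisfying \eqref{equ_kkt} together with $\hat{\sigma}=\sigma$, which is exactly FPKKT.

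For (i)$\Rightarrow$(iii), take $r,v$ from FPKKT, so that $\sigma\circ r=\mu$, $r=F(\sigma)+v\mathbf{1}$ and $\mathbf{1}^\top\sigma=1$. Set
\[
\check{\sigma}=\sigma,\quad \check{r}=r,\quad \bar{\lambda}=\check{\sigma}-\sigma=0,\quad \tilde{\lambda}=0,
\]
and check the rows of \eqref{equ_kkt_mcp} one at a time:
\begin{itemize}
\item row 1 becomes $0+0+r-r=0$;
\item row 2 is $0$ by the choice of $\bar{\lambda}$;
\item row 3 is $-\bar{\lambda}^\top\mathbf{1}=0$;
\item row 4 is $r\circ\sigma-\mu=0$;
\item row 5 is $\sigma\circ r-\mu=0$;
\item rows 6 and 7 are the remaining FPKKT equations.
\end{itemize}
The sign requirements $\check{\sigma},\check{r}\geq0$ hold because $\sigma\in\Delta$ and $\mu>0$ force $\sigma>0$, and then $r=\mu/\sigma>0$.

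For (iii)$\Rightarrow$(i), assume \eqref{equ_kkt_mcp} holds with $\check{\sigma}=\sigma$. Row 4 then reads $r\circ\sigma=\mu$, row 6 gives $r=F(\sigma)+v\mathbf{1}$, and row 7 gives $\mathbf{1}^\top\sigma=1$. Together with $\hat{\sigma}=\sigma$, these are exactly FPKKT~\eqref{equ_fpkkt}. The remaining rows play no role in this direction.

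The proof is essentially bookkeeping, and I expect no real obstacle. The one point needing care is the direction (i)$\Rightarrow$(iii): the multipliers $\bar{\lambda},\tilde{\lambda},\check{r}$ must be chosen consistently so that the first row, which contains the Jacobian term $-(\partial F(\sigma)/\partial\sigma)^\top\bar{\lambda}$, vanishes. The choice $\bar{\lambda}=0$ eliminates the Jacobian term, and $\check{r}=r$ then cancels $r$. One should also record that $\check{r}$ is not forced by (iii) itself. In the converse direction only rows 4, 6 and 7 are used, and I will remark that the hypothesis $\check{\sigma}=\sigma$ is exactly what selects this distinguished central path.
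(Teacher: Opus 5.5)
Your proposal is correct and follows the paper's proof. For (i)$\Leftrightarrow$(ii) you use the definition of $M$, and your appeal to the uniqueness in Proposition~\ref{thm_brouwer_continuous} makes explicit what the paper leaves implicit; for (i)$\Leftrightarrow$(iii) you use the same choice $(\bar{\lambda},\tilde{\lambda},\check{\sigma},\check{r})=(\mathbf{0},0,\sigma,r)$ and, conversely, read FPKKT off rows 4, 6 and 7, as the paper does. One harmless slip in a side remark: under (iii), $\check{r}$ is in fact forced, since row 4 with $\mu>0$ gives $\sigma>0$ and row 5 then yields $\check{r}=\mu/\sigma=r$.
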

\begin{proof}
  ${\rm (i)}\Leftrightarrow{\rm (ii)}$:
  The Brouwer function $(\hat{\sigma},r,v)=M(\sigma,\mu)$ is defined by the perturbed KKT conditions~\eqref{equ_kkt}. FPKKT~\eqref{equ_fpkkt} is the simultaneous equation of the perturbed KKT conditions~\eqref{equ_kkt} and $\sigma=\hat{\sigma}$. The equivalence follows.

  ${\rm (i)}\Leftrightarrow{\rm (iii)}$:
  Given $(\sigma,\mu,r,v)$ satisfying FPKKT~\eqref{equ_fpkkt}, the tuple given by $(\sigma,\mu,r,v,\bar{\lambda},\tilde{\lambda},\check{\sigma},\check{r})=(\sigma,\mu,r,v,\mathbf{0},0,\sigma,r)$ satisfies $\check{\sigma}=\sigma$ and the perturbed KKT conditions~\eqref{equ_kkt_mcp}. Conversely, given $(\sigma,\mu,r,v,\bar{\lambda},\tilde{\lambda},\check{\sigma},\check{r})$ satisfying $\check{\sigma}=\sigma$ and the perturbed KKT conditions~\eqref{equ_kkt_mcp}, the tuple $(\sigma,\mu,r,v)$ satisfies FPKKT~\eqref{equ_fpkkt}.
\end{proof}

Central paths are defined by perturbed KKT conditions. Thus the path given by FPKKT~\eqref{equ_fpkkt} is precisely the central path of MCP~\eqref{equ_mcp} on which $\check{\sigma}=\sigma$. As in interior point methods, these paths lead to solutions of ${\rm VI}(F,\Delta)$ as $\mu\to\mathbf{0}^+$.

Note that $(\sigma,\mathbf{0})$ satisfying FPKKT~\eqref{equ_fpkkt} is not necessarily a solution of ${\rm VI}(F,\Delta)$, since the $\mu=0$ case additionally requires $r\geq0$. For $\sigma\in\Delta$, $r\geq0$ if and only if $\mu\geq0$ because $\sigma\circ r=\mu$. Hence a point $(\sigma^\star,\mathbf{0})$ satisfying FPKKT~\eqref{equ_fpkkt} must be a limit point as $\mu\to\mathbf{0}^+$ to be a solution of ${\rm VI}(F,\Delta)$. This is proved in the next section.

\section{Fiber bundle as the solution space of FPKKT}

The previous section established FPKKT~\eqref{equ_fpkkt} as an equivalent characterization of ${\rm VI}(F,\Delta)$ and characterized the paths subject to it as special central paths. We now introduce the central geometric object of the paper. Instead of viewing FPKKT~\eqref{equ_fpkkt} as an equation system, we regard its solution set as a geometric space. This viewpoint reveals a fiber-bundle structure that unifies starting point selection, path-following, singularity avoidance, and convergence to solutions of ${\rm VI}(F,\Delta)$. In this section, we study the solution space of FPKKT~\eqref{equ_fpkkt} with $\mu\in\mathbb{R}^n$ and formally show that the paths as $\mu\to\mathbf{0}^+$ equivalently lead to solutions of ${\rm VI}(F,\Delta)$.

\subsection{The fixed-point bundle}

The solution space of FPKKT~\eqref{equ_fpkkt} has a natural fiber-bundle structure. For each $\sigma$, the admissible $\mu$ values form an affine one-dimensional family given by $\mu=\sigma\circ F(\sigma)+v\sigma$. The disjoint union of these affine lines over all $\sigma\in\Delta$ forms a fiber bundle in equation~\eqref{equ_fpbund}. The bundle is denoted as $(E, \Delta, \alpha:E\to\Delta)$, where $E$ is the total space, $\Delta$ is the base space, $\alpha:E\to\Delta$ is the projection map, and $\alpha^{-1}(\sigma)=\{\sigma\}\times B(\sigma)$ is the fiber over $\sigma$.
\begin{equation}
  \label{equ_fpbund}
  \begin{aligned}
     & E=\bigcup_{\sigma\in\Delta}\{\sigma\}\times B(\sigma)                 \\
     & B(\sigma)=\left\{\sigma\circ F(\sigma)+v\sigma|v\in\mathbb{R}\right\} \\
     & \alpha\left((\sigma,\mu)\right)=\sigma                                \\
  \end{aligned}
\end{equation}

We refer to this as the fixed-point bundle. Conventionally, we also use the total space $E$ to refer to the fiber bundle. This geometric structure is not merely a change of terminology. It provides a characterization of the paths that separates movement of $\sigma$ along the base space from movement of $\mu$ along individual fibers. This separation is crucial for starting point selection and singularity avoidance.

We eliminate $r$ and $v$ in FPKKT~\eqref{equ_fpkkt} to obtain a fixed-point bundle equation in $(\sigma,\mu)$ only. Apply the projection matrix $I-\sigma\mathbf{1}^\top$ to FPKKT~\eqref{equ_fpkkt}. The resulting equation is $G(\sigma,\mu)=0$ given in equation~\eqref{equ_g}. We work primarily with this equivalent formulation in place of FPKKT~\eqref{equ_fpkkt}. It describes the fixed-point bundle and serves as the fundamental equation throughout the remainder of the paper. We use $(\sigma,\mu)\in E$ to denote points satisfying $G(\sigma,\mu)=0$.
\begin{equation}
  \label{equ_g}
  G(\sigma,\mu)=\left(I-\sigma\mathbf{1}^\top\right)\left(\sigma\circ F(\sigma)-\mu\right),\quad(\sigma,\mu)\in\Delta\times\mathbb{R}^n
\end{equation}

The projection matrix $I-\sigma\mathbf{1}^\top$ appears throughout this paper. The following properties are frequently used in proofs and derivations. It projects along vector $\sigma$ such that $(I-\sigma\mathbf{1}^\top)\sigma=0$. It projects onto the subspace orthogonal to $\mathbf{1}$ such that $\mathbf{1}^\top(I-\sigma\mathbf{1}^\top)=0$. It is idempotent such that $(I-\sigma\mathbf{1}^\top)(I-\sigma\mathbf{1}^\top)=I-\sigma\mathbf{1}^\top$. The linear equation $(I-\sigma\mathbf{1}^\top)x=b$ has a solution when $\mathbf{1}^\top b=0$, and the general solution is $x=b+k\sigma$ for $k\in\mathbb{R}$. Additionally, $I-\mathbf{1}\sigma^\top=(I-\sigma\mathbf{1}^\top)^\top$ is another projection matrix with similar properties.

We introduce two sections of the fixed-point bundle. A section of a fiber bundle is a map that sends every $\sigma$ to a point on the fiber $B(\sigma)$. The nonnegative section $\check{\mu}(\sigma)$ in equation~\eqref{equ_gap_section} satisfies $\check{\mu}(\sigma)\geq0$ and directly characterizes the gap of ${\rm VI}(F,\Delta)$. As a section of the fiber bundle, it satisfies $(\sigma,\check{\mu}(\sigma))\in E$ for every $\sigma\in\Delta$.
\begin{equation}
  \label{equ_gap_section}
  \check{\mu}(\sigma)=\sigma\circ\left(F(\sigma)-\left(\min_i F_i(\sigma)\right)\mathbf{1}\right)
\end{equation}

Section $\check{\mu}(\sigma)$ characterizes the gap of ${\rm VI}(F,\Delta)$ by Theorem~\ref{thm_section}. This theorem establishes the fundamental link between the fixed-point bundle $E$ and the variational inequality ${\rm VI}(F,\Delta)$.
\begin{theorem}
  \label{thm_section}
  For $(\sigma,\mu)\in\Delta\times\mathbb{R}^n$, the following equation holds.
  \begin{equation}
    \label{equ_measure}
    {\rm gap}(\sigma)=\mathbf{1}^\top\check{\mu}(\sigma)=\mathbf{1}^\top\mu-\min_i\left((G(\sigma,\mu)+\mu)/\sigma\right)_i
  \end{equation}
  Consequently, for $\sigma^\star\in\Delta$, the following statements are equivalent.
  \begin{enumerate}
    \item $\sigma^\star$ is a solution of ${\rm VI}(F,\Delta)$.
    \item $\sigma^\star$ is a zero point of $\check{\mu}(\sigma)$.
    \item $(\sigma^\star,\mathbf{0})$ is a limit point on $E\cap(\Delta\times\{\mu|\mu>0\})$.
  \end{enumerate}
\end{theorem}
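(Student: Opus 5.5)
The proof splits into two parts: the chain of equalities \eqref{equ_measure}, which is a direct computation, and then the three equivalences, which follow from it together with the fiber structure of $E$ in \eqref{equ_fpbund}.

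\textbf{The identity.} For the first equality, the supremum in ${\rm gap}(\sigma)=\sup_{\sigma'\in\Delta}\langle F(\sigma),\sigma-\sigma'\rangle$ is a linear function of $\sigma'$ over the simplex. It is therefore attained at a vertex, which gives ${\rm gap}(\sigma)=\sigma^\top F(\sigma)-\min_i F_i(\sigma)$. Because $\mathbf{1}^\top\sigma=1$, this equals $\mathbf{1}^\top\big(\sigma\circ(F(\sigma)-(\min_i F_i(\sigma))\mathbf{1})\big)=\mathbf{1}^\top\check{\mu}(\sigma)$.

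For the second equality, expand the projection:
$$G(\sigma,\mu)+\mu=\sigma\circ F(\sigma)-\sigma\big(\sigma^\top F(\sigma)-\mathbf{1}^\top\mu\big).$$
Dividing componentwise by $\sigma$ gives $F(\sigma)-(\sigma^\top F(\sigma)-\mathbf{1}^\top\mu)\mathbf{1}$. Its minimum is $\min_iF_i(\sigma)-\sigma^\top F(\sigma)+\mathbf{1}^\top\mu$, so subtracting it from $\mathbf{1}^\top\mu$ leaves exactly $\sigma^\top F(\sigma)-\min_iF_i(\sigma)={\rm gap}(\sigma)$.

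This computation has one technical gap. When some $\sigma_i=0$, the $i$-th component of $G+\mu$ vanishes and the quotient is $0/0$. I would state that the quotient is understood as the simplified expression above, which is its continuous extension from the relative interior. The formula is then meaningful on all of $\Delta\times\mathbb{R}^n$, and the identity holds there without further argument.

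\textbf{The equivalences.}

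(i)$\Leftrightarrow$(ii): Solutions of ${\rm VI}(F,\Delta)$ are exactly the points with zero gap. By the identity, ${\rm gap}(\sigma^\star)=\mathbf{1}^\top\check{\mu}(\sigma^\star)$. Since $\check{\mu}(\sigma)\geq0$ componentwise, the sum vanishes if and only if $\check{\mu}(\sigma^\star)=0$.

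(ii)$\Rightarrow$(iii): The fiber over $\sigma^\star$ itself is not usable when $\sigma^\star$ lies on the boundary, because components with $\sigma^\star_i=0$ force $\mu_i=0$. So I would first move into the interior. Set $\sigma_k=(1-t_k)\sigma^\star+t_k\mathbf{1}/n$ with $t_k\downarrow0$; then $\sigma_k>0$. Then move up the fiber by taking $v=-\min_iF_i(\sigma_k)+t_k$, which gives $\mu_k=\check{\mu}(\sigma_k)+t_k\sigma_k$. This point lies on $B(\sigma_k)$ and is strictly positive. By continuity of $\check{\mu}$, we get $\mu_k\to\check{\mu}(\sigma^\star)=0$, so $(\sigma_k,\mu_k)\to(\sigma^\star,\mathbf{0})$.

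(iii)$\Rightarrow$(ii): Take $(\sigma_k,\mu_k)\in E$ with $\mu_k>0$ converging to $(\sigma^\star,\mathbf{0})$, and write $\mu_k=\sigma_k\circ(F(\sigma_k)+v_k\mathbf{1})$.
\begin{itemize}
\item Since $\mu_k>0$, every component of $\sigma_k$ is positive, and hence $F_i(\sigma_k)+v_k>0$ for all $i$, i.e.\ $v_k>-\min_iF_i(\sigma_k)$.
\item It follows that $\mu_k-\check{\mu}(\sigma_k)=(v_k+\min_iF_i(\sigma_k))\sigma_k\geq0$.
\item Therefore $0\leq\check{\mu}(\sigma_k)\leq\mu_k\to0$, and continuity of $\check{\mu}$ gives $\check{\mu}(\sigma^\star)=0$.
\end{itemize}

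The main obstacle is the boundary case in (ii)$\Rightarrow$(iii), where the approximating points must be pushed off the fiber over $\sigma^\star$. The interpolation toward the barycenter handles it. The $0/0$ convention in the identity is the other point that needs a careful statement.
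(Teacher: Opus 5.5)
Your proof is correct and follows essentially the same route as the paper: the same direct computation for the identity, the same approximating points $\check{\mu}(\sigma_t)+k_t\sigma_t$ with $\sigma_t>0$ and $k_t\downarrow 0$ for (ii)$\Rightarrow$(iii), and the same observation that positive points on a fiber dominate $\check{\mu}$ for (iii)$\Rightarrow$(ii). For the last step you argue componentwise via $v_k>-\min_i F_i(\sigma_k)$, whereas the paper uses the summed identity with $G=0$; your explicit $0/0$ convention and barycentric interpolation make precise two points the paper leaves implicit.
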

\begin{proof}
  The gap function of ${\rm VI}(F,\Delta)$ satisfies
  $${\rm gap}(\sigma)=\sup_{\sigma'\in\Delta}\left\langle F(\sigma),\sigma-\sigma'\right\rangle=\sigma^\top F(\sigma)-\min_i F_i(\sigma).$$
  The section $\check{\mu}(\sigma)$ satisfies
  \begin{align*}
      & \mathbf{1}^\top\check{\mu}(\sigma)=\sigma^\top F(\sigma)-\min_i F_i(\sigma)                                                                                                                            \\
    = & \sigma^\top F(\sigma)-\min_i\left(\frac{G(\sigma,\mu)+\mu+(\sigma^\top F(\sigma)-\mathbf{1}^\top\mu)\sigma}{\sigma}\right)_i=\mathbf{1}^\top\mu-\min_i\left(\frac{G(\sigma,\mu)+\mu}{\sigma}\right)_i.
  \end{align*}
  This proves the equation.

  ${\rm (i)}\Leftrightarrow{\rm (ii)}$:
  Since ${\rm gap}(\sigma)\geq0$ and $\check{\mu}(\sigma)\geq0$, ${\rm gap}(\sigma)=0$ is equivalent to $\check{\mu}(\sigma)=0$.

  ${\rm (ii)}\Rightarrow{\rm (iii)}$:
  For any $\sigma\in\Delta$ and $k\in\mathbb{R}$, we have $(\sigma,\check{\mu}(\sigma)+k\sigma)\in E$. Continuity of $\check{\mu}(\sigma)$ gives $\check{\mu}(\sigma)\to\check{\mu}(\sigma^\star)$ as $\sigma\to\sigma^\star$. For any $\sigma_t\to\sigma^\star$ with $\sigma_t>0$ and any $k_t\to0$ with $k_t>0$, we have $\mu_t=\check{\mu}(\sigma_t)+k_t\sigma_t>0$ and $\mu_t\to0$. Hence $(\sigma^\star,\mathbf{0})$ is a limit point of $(\sigma_t,\mu_t)\in E$ on $\Delta\times\{\mu|\mu>0\}$.

  ${\rm (iii)}\Rightarrow{\rm (ii)}$:
  Given $(\sigma,\mu)\in E$, we have $G(\sigma,\mu)=0$. If additionally $\mu>0$, then $\min_i((G(\sigma,\mu)+\mu)/\sigma)_i>0$, and consequently $0\leq\mathbf{1}^\top\check{\mu}(\sigma)<\mathbf{1}^\top\mu$. If $(\sigma_t,\mu_t)\to(\sigma^\star,\mathbf{0})$ with $(\sigma_t,\mu_t)\in E$ and $\mu_t>0$, then $\mathbf{1}^\top\check{\mu}(\sigma_t)\to0$, hence $\check{\mu}(\sigma_t)\to0$ since $\check{\mu}(\sigma)\geq0$. Continuity of $\check{\mu}(\sigma)$ yields $\check{\mu}(\sigma^\star)=0$.
\end{proof}

Theorem~\ref{thm_section} reveals that solutions of ${\rm VI}(F,\Delta)$ are equivalently zero points of $\check{\mu}(\sigma)$, and equivalently limit points of the paths satisfying $G(\sigma,\mu)=0$ as $\mu\to\mathbf{0}^+$. This finally justifies path-following subject to $G(\sigma,\mu)=0$ (equivalently FPKKT~\eqref{equ_fpkkt}) as $\mu\to\mathbf{0}^+$ to solve ${\rm VI}(F,\Delta)$.

Equation~\eqref{equ_measure} also provides a measurement of the gap of ${\rm VI}(F,\Delta)$. Specifically, for any $(\sigma,\mu)\in\Delta\times\{\mu|\mu>0\}$, if $\lvert G(\sigma,\mu)\rvert\leq\mu$ componentwise, then $0\leq\mathbf{1}^\top\check{\mu}(\sigma)\leq\mathbf{1}^\top\mu$. Thus $G(\sigma,\mu)$ measures proximity to the fixed-point bundle, and $\mathbf{1}^\top\mu$ measures proximity to $\mu=0$. When both are sufficiently precise, so is $\mathbf{1}^\top\check{\mu}(\sigma)$. This translates precision approaching the path and its endpoint into precision solving ${\rm VI}(F,\Delta)$.

\subsection{Differential properties}

Having established that paths on the fixed-point bundle equivalently lead to desired solutions, we now develop the differential properties needed for constructing the path and the algorithm that follows it. To maintain $\sigma\in\Delta$, we use the parameterization
\begin{equation}
  \sigma={\rm softmax}(\theta)=\frac{\exp(\theta)}{\mathbf{1}^\top\exp(\theta)}.
\end{equation}
The differential $d\sigma$ with respect to $d\theta$ is
\begin{equation}
  \label{equ_sigma_theta}
  \begin{aligned}
    d\sigma=\frac{\exp(\theta)\circ d\theta}{\mathbf{1}^\top\exp(\theta)}-\frac{\mathbf{1}^\top\left(\exp(\theta)\circ d\theta\right)}{(\mathbf{1}^\top\exp(\theta))^2} \exp(\theta)=\sigma\circ d\theta-(\sigma^\top d\theta)\sigma=\sigma\circ\left(I-\mathbf{1}\sigma^\top\right)d\theta.
  \end{aligned}
\end{equation}

For any $d\theta$, we have $\mathbf{1}^\top d\sigma=0$. Conversely, given $\mathbf{1}^\top d\sigma=0$, the linear equation $d\sigma/\sigma=(I-\mathbf{1}\sigma^\top)d\theta$ admits the general solution $d\theta=d\sigma/\sigma+k\mathbf{1}$ with $k\in\mathbb{R}$. We denote $\overline{d\theta}:=(I-\mathbf{1}\sigma^\top)d\theta$. Since subtracting $\mathbf{1}$ from $d\theta$ does not change $d\sigma$, we use $\overline{d\theta}$ as the infinitesimal in this paper.

We introduce the differentiable section $\tilde{\mu}(\sigma)$ in equation~\eqref{equ_smooth_section}, which satisfies $\mathbf{1}^\top\tilde{\mu}(\sigma)=0$ and characterizes the differential properties of the fixed-point bundle. As a section of the fiber bundle, it satisfies $(\sigma,\tilde{\mu}(\sigma))\in E$ for every $\sigma\in\Delta$.
\begin{equation}
  \label{equ_smooth_section}
  \tilde{\mu}(\sigma)=\sigma\circ\left(F(\sigma)-\left(\sigma^\top F(\sigma)\right)\mathbf{1}\right)=\left(I-\sigma\mathbf{1}^\top\right)\left(\sigma\circ F(\sigma)\right)
\end{equation}

We use section $\tilde{\mu}(\sigma)$ and its Jacobian to study the differential properties of the fixed-point bundle. The differential $d\tilde{\mu}(\sigma)$ with respect to $\overline{d\theta}$ is derived as follows. In the third line we use $\mathbf{1}^\top d\sigma=0$. In the last line we use $d\sigma/\sigma=\overline{d\theta}=(I-\mathbf{1}\sigma^\top)\overline{d\theta}$.
\begin{align*}
  d\tilde{\mu}(\sigma) & =\left(I-\sigma\mathbf{1}^\top\right)\left(\sigma\circ\frac{\partial F(\sigma)}{\partial\sigma}+F(\sigma)\circ I\right)d\sigma-\mathbf{1}^\top\left(\sigma\circ F(\sigma)\right)d\sigma                                                   \\
                       & =\left(I-\sigma\mathbf{1}^\top\right)\left(\sigma\circ\frac{\partial F(\sigma)}{\partial\sigma}+F(\sigma)\circ I-(\sigma^\top F(\sigma)) I\right)d\sigma-\left(\sigma^\top F(\sigma)\right)\sigma\mathbf{1}^\top d\sigma                  \\
                       & =\sigma\circ\left(I-\mathbf{1}\sigma^\top\right)\left(\frac{\partial F(\sigma)}{\partial\sigma}\circ\sigma+\left(\left(I-\mathbf{1}\sigma^\top\right)F(\sigma)\right)\circ I\right)\frac{d\sigma}{\sigma}                                 \\
                       & =\sigma\circ\left(I-\mathbf{1}\sigma^\top\right)\left(\frac{\partial F(\sigma)}{\partial\sigma}\circ\sigma+\left(\left(I-\mathbf{1}\sigma^\top\right)F(\sigma)\right)\circ I\right)\left(I-\mathbf{1}\sigma^\top\right)\overline{d\theta}
\end{align*}

We introduce the matrix $J(\sigma)$ in equation~\eqref{equ_j}, where $k\in\mathbb{R}$ is an arbitrary scalar. This Jacobian-like matrix directly governs the differential properties. It appears in the singular manifold, the differential equation, and the Newton equation of the fixed-point bundle.
\begin{equation}
  \label{equ_j}
  J(\sigma):=\left(I-\mathbf{1}\sigma^\top\right)\left(\frac{\partial F(\sigma)}{\partial\sigma}\circ\sigma+\left(\left(I-\mathbf{1}\sigma^\top\right)F(\sigma)\right)\circ I\right)\left(I-\mathbf{1}\sigma^\top\right)+k\mathbf{1}\sigma^\top
\end{equation}
Using $\sigma^\top\overline{d\theta}=0$, we obtain the differential equation of $\tilde{\mu}(\sigma)$:
\begin{equation}
  d\tilde{\mu}(\sigma)=\sigma\circ J(\sigma)\overline{d\theta}.
\end{equation}

The purpose of the arbitrary scalar $k\in\mathbb{R}$ in $J(\sigma)$ is to adjust the eigenvalues. When $k=0$, $J(\sigma)$ has an inherent zero eigenvalue with eigenvector $\mathbf{1}$. Proposition~\ref{thm_eigenvalue} shows how the eigenvalues are adjusted. A subspace $U$ is an invariant subspace of matrix $A$ if $Au\in U$ for every $u\in U$. If $A$ admits an invariant subspace decomposition, then its eigenvalues are the union of the eigenvalues of its restrictions to these invariant subspaces.
\begin{proposition}
  \label{thm_eigenvalue}
  Matrix $J(\sigma)$ admits the invariant subspace decomposition $\mathbb{R}^n={\rm span}(\mathbf{1})\oplus\ker(\sigma^\top)$. The eigenvalue corresponding to eigenvectors in ${\rm span}(\mathbf{1})$ is $k$, while eigenvalues corresponding to eigenvectors in $\ker(\sigma^\top)$ are independent of $k$.
\end{proposition}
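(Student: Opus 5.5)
Write $J(\sigma)=P^\top A(\sigma)P+k\mathbf{1}\sigma^\top$ with $P=I-\sigma\mathbf{1}^\top$, so that $P^\top=I-\mathbf{1}\sigma^\top$. Here
$$A(\sigma)=\frac{\partial F(\sigma)}{\partial\sigma}\circ\sigma+\left(\left(I-\mathbf{1}\sigma^\top\right)F(\sigma)\right)\circ I.$$
The plan is to check directly that each of the two summands of the decomposition $\mathbb{R}^n={\rm span}(\mathbf{1})\oplus\ker(\sigma^\top)$ is mapped into itself. Then we read off the eigenvalues from the restrictions.

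\textbf{Direct sum.} Since $\sigma\in\Delta$ gives $\sigma^\top\mathbf{1}=1\neq0$, we have $\mathbf{1}\notin\ker(\sigma^\top)$. Dimensions add to $n$, so the sum is direct. Explicitly, any $x$ decomposes as $x=(\sigma^\top x)\mathbf{1}+(I-\mathbf{1}\sigma^\top)x$, where the second term lies in $\ker(\sigma^\top)$ because $\sigma^\top(I-\mathbf{1}\sigma^\top)=\sigma^\top-\sigma^\top=0$.

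\textbf{Invariance of ${\rm span}(\mathbf{1})$.} By the projection properties listed earlier, $(I-\mathbf{1}\sigma^\top)\mathbf{1}=\mathbf{1}-\mathbf{1}=0$. Hence the first summand kills $\mathbf{1}$, and
$$J(\sigma)\mathbf{1}=k\mathbf{1}(\sigma^\top\mathbf{1})=k\mathbf{1}.$$
So ${\rm span}(\mathbf{1})$ is invariant, with eigenvalue $k$.

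\textbf{Invariance of $\ker(\sigma^\top)$.} For $u$ with $\sigma^\top u=0$, the term $k\mathbf{1}\sigma^\top u$ vanishes. We are left with
$$J(\sigma)u=(I-\mathbf{1}\sigma^\top)A(\sigma)(I-\mathbf{1}\sigma^\top)u,$$
which lies in the range of $I-\mathbf{1}\sigma^\top$, i.e.\ in $\ker(\sigma^\top)$. So this subspace is invariant as well. Moreover, the restriction $J(\sigma)|_{\ker(\sigma^\top)}$ is the map $u\mapsto P^\top A(\sigma)P u$, which does not involve $k$. Hence its eigenvalues are independent of $k$.

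\textbf{Conclusion.} By the stated fact that the spectrum of a matrix with an invariant subspace decomposition is the union of the spectra of its restrictions, the eigenvalues of $J(\sigma)$ are $k$ together with the $n-1$ eigenvalues of the $k$-independent restriction. In a basis adapted to the decomposition, $J(\sigma)$ is block diagonal, which makes the union-of-spectra fact immediate.

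There is no real obstacle here. The only care needed is to use $\sigma^\top\mathbf{1}=1$, which relies on $\sigma\in\Delta$, in both the direct-sum claim and the computation $J(\sigma)\mathbf{1}=k\mathbf{1}$.
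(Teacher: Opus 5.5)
Your proof is correct and follows the paper's argument. As in the paper, $J(\sigma)\mathbf{1}=k\mathbf{1}$ follows from $(I-\mathbf{1}\sigma^\top)\mathbf{1}=0$ and $\sigma^\top\mathbf{1}=1$, invariance of $\ker(\sigma^\top)$ rests on $\sigma^\top(I-\mathbf{1}\sigma^\top)=0$ (the paper phrases this as $\sigma^\top J(\sigma)=k\sigma^\top$), and $k$-independence holds because $k\mathbf{1}\sigma^\top$ vanishes on $\ker(\sigma^\top)$. One notational slip: with $P=I-\sigma\mathbf{1}^\top$ the paper's matrix is $J(\sigma)=P^\top A(\sigma)P^\top+k\mathbf{1}\sigma^\top$, not $P^\top A(\sigma)P+k\mathbf{1}\sigma^\top$, since your $P$ does not annihilate $\mathbf{1}$ in general; your explicit computations do use $I-\mathbf{1}\sigma^\top$ on both sides, so nothing breaks.
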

\begin{proof}
  First, $J(\sigma)\mathbf{1}=k\mathbf{1}$ since $(I-\mathbf{1}\sigma^\top)\mathbf{1}=0$ and $\sigma^\top\mathbf{1}=1$. Hence ${\rm span}(\mathbf{1})$ is an invariant subspace. Second, $\sigma^\top J(\sigma)=k\sigma^\top$ since $\sigma^\top(I-\mathbf{1}\sigma^\top)=0$ and $\sigma^\top\mathbf{1}=1$. Then $\ker(\sigma^\top)=\{u|\sigma^\top u=0\}$ is invariant because $\sigma^\top u=0$ implies $\sigma^\top J(\sigma)u=0$. Since ${\rm span}(\mathbf{1})$ and $\ker(\sigma^\top)$ are complementary, this is an invariant subspace decomposition.

  For an eigenvector in ${\rm span}(\mathbf{1})$, the corresponding eigenvalue is $k$ because $J(\sigma)\mathbf{1}=k\mathbf{1}$. For $u\in\ker(\sigma^\top)$, $J(\sigma)u=\lambda u$ reduces to $J_0(\sigma)u=\lambda u$, where $J_0(\sigma)$ is the $k=0$ case of $J(\sigma)$, by $\sigma^\top u=0$. Thus the eigenvalue $\lambda$ is independent of $k$.
\end{proof}

Proposition~\ref{thm_eigenvalue} shows that the inherent zero eigenvalue of $J(\sigma)$ corresponding to eigenvector $\mathbf{1}$ is removed, while the remaining eigenvalues are unchanged. This trick prevents $J(\sigma)$ from being inherently singular for theoretical simplicity. However, we use $J(\sigma)$ with $k=0$ in the algorithm for computational simplicity.

Next we derive the differential equation of $G(\sigma,\mu)=0$. This equation provides the tangent step in $\theta$ with respect to the reduction $d\mu$ for following the fixed-point bundle.
\begin{align*}
  G'_\theta(\sigma,\mu)d\theta=                                                              & -G'_\mu(\sigma,\mu)d\mu                  \\
  \sigma\circ J(\sigma)\overline{d\theta}+(\mathbf{1}^\top\mu)\sigma\circ\overline{d\theta}= & \left(I-\sigma\mathbf{1}^\top\right)d\mu
\end{align*}
Multiplying both sides by $1/\sigma$, we obtain the differential equation of $G(\sigma,\mu)=0$ in equation~\eqref{equ_diff}. Denote $J_G:=J(\sigma)+(\mathbf{1}^\top\mu)I$ as shorthand.
\begin{equation}
  \label{equ_diff}
  \left(J(\sigma)+(\mathbf{1}^\top\mu)I\right)\overline{d\theta}=\left(I-\mathbf{1}\sigma^\top\right)(d\mu/\sigma)
\end{equation}

A point $(\sigma,\mu)$ on the fixed-point bundle is called singular if $J(\sigma)+(\mathbf{1}^\top\mu)I$ is singular. For any $\sigma\in\Delta$, there are at most $n-1$ singular points on the fiber $B(\sigma)$, corresponding to the eigenvalues of $-J(\sigma)$ except the arbitrary $-k$. The singular manifold of the fixed-point bundle is therefore constructed from the real eigenvalues of $J(\sigma)$. Denote the real eigenvalues of $J(\sigma)$ except $k$ by $\lambda_j^\perp(J(\sigma))\in\mathbb{R}$. The set of all singular points of $E$ is formalized as the singular manifold $S$ in equation~\eqref{equ_singular}.
\begin{equation}
  \label{equ_singular}
  S=\left\{(\sigma,\tilde{\mu}(\sigma)-\lambda_j^\perp\left(J(\sigma)\right)\sigma)|\lambda_j^\perp\left(J(\sigma)\right)\in\mathbb{R},\sigma\in\Delta\right\}.
\end{equation}
Indeed, $S$ consists of all singular points of the fixed-point bundle. First, for every $\sigma\in\Delta$ and $(\sigma,\mu)\in S$, $\mu$ lies on the fiber $B(\sigma)$ over $\sigma$ because $\tilde{\mu}(\sigma)$ maps $\sigma$ to a point on $B(\sigma)$, and if $\mu_0\in B(\sigma)$, then $\mu_0+k\sigma\in B(\sigma)$ for any $k\in\mathbb{R}$. Second, for every $\sigma\in\Delta$, $(\sigma,\mu)\in S$ if and only if $\mathbf{1}^\top\mu$ is an eigenvalue of $-J(\sigma)$. Indeed, $\mathbf{1}^\top\tilde{\mu}(\sigma)=0$ and $\mathbf{1}^\top\sigma=1$, so $\mathbf{1}^\top\mu=-\lambda_j^\perp(J(\sigma))$, which are precisely the real eigenvalues of $-J(\sigma)$ except $-k$.

\section{Paths on the fixed-point bundle}

Having shown that paths on the fixed-point bundle equivalently lead to solutions of ${\rm VI}(F,\Delta)$, and having prepared the differential properties of the bundle, we now establish a globally convergent path. We show that from any designated starting point, there exists a path leading to solutions of ${\rm VI}(F,\Delta)$, and moreover this path is well-behaved enough to follow.

\subsection{Analytic curves}

Consider the curve on the fixed-point bundle $E$ defined by
\begin{equation}
  \Gamma(\sigma_{\rm init})=\left\{(\sigma,\mu)\in E|\mu=\gamma\sigma_{\rm init},\gamma\in[0,+\infty)\right\}
\end{equation}
with designated $\sigma_{\rm init}\in\Delta$. This is a one-dimensional curve parameterized by $\gamma$, where every $(\sigma,\mu)\in E$ on it satisfies $G(\sigma,\mu)=0$. When $F(\sigma)$ is real-analytic, $G(\sigma,\mu)$ is real-analytic, so $\Gamma(\sigma_{\rm init})$ is called an analytic curve as the zero set of an analytic function.

Since $\sigma\circ F(\sigma)+v\sigma=\gamma\sigma_{\rm init}$ and $F(\sigma)$ is bounded, $\Gamma(\sigma_{\rm init})$ starts from a single point $(\sigma_{\rm init},\gamma\sigma_{\rm init})$ as $\gamma\to+\infty$. Thus $\Gamma(\sigma_{\rm init})$ constitutes a path starting from any designated $\sigma_{\rm init}\in\Delta$ as $\gamma\to+\infty$ and ending at solutions of ${\rm VI}(F,\Delta)$ as $\gamma\to0^+$.

The curve $\Gamma(\sigma_{\rm init})$ is smooth at nonsingular points, and path-following is well-behaved there. However, singular points may occur. Traditional path-following methods struggle at such points. The fixed-point bundle structure provides a simple mechanism for singularity avoidance through motion along the fiber $B(\sigma)$. Specifically, at a point $(\sigma,\mu)$ where $\mathbf{1}^\top\mu$ equals an eigenvalue of $-J(\sigma)$, we jump to $(\sigma,\mu+\beta\sigma)$ with $\beta$ chosen so that $\mu+\beta\sigma>0$ and $\mathbf{1}^\top\mu+\beta$ is not an eigenvalue of $-J(\sigma)$. Then $(\sigma,\mu+\beta\sigma)$ is no longer singular, and we can proceed to reduce $\mu+\beta\sigma$. This moves to a new analytic curve $\Gamma((\mu+\beta\sigma)/(\mathbf{1}^\top\mu+\beta))$ with a different starting point.

For path-following, continuous differentiability suffices. Nonetheless, analyticity of $\Gamma(\sigma_{\rm init})$ yields an oddness theorem for ${\rm VI}(F,\Delta)$ as a byproduct, analogous to the classical oddness theorem for Nash equilibria \cite{oddness2}. This theorem shows that when all solutions are nonsingular, they are connected in pairs together with a designated starting point, implying an odd number of solutions. Since singular solutions are nongeneric, this conclusion holds for almost all ${\rm VI}(F,\Delta)$ instances.

\begin{theorem}[Oddness theorem]
  \label{thm_odd}
  For ${\rm VI}(F,\Delta)$ with $F(\sigma)$ real-analytic on $\Delta$, if $J(\sigma)$ is nonsingular at every solution, then there are an odd number of solutions.
\end{theorem}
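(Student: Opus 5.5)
We prove the statement with a parity argument on the analytic curve $\Gamma(\sigma_{\rm init})$, in the style of the Lemke--Howson / Harsanyi oddness proofs. Choose $\sigma_{\rm init}$ in the interior of $\Delta$ so that the curve is generic. Concretely, we require that the start $(\sigma_{\rm init},\gamma\sigma_{\rm init})$ is nonsingular for large $\gamma$ and that $\gamma\sigma_{\rm init}$ is a regular value of the map $\sigma\mapsto\mu$ restricted to $E$. Since $E$ is an $(n-1)$-dimensional graph-like object, this map is essentially $\tilde{\mu}(\sigma)+v\sigma$, viewed as a map from $(\sigma,v)$ to $\{\mu\}$. By Sard's theorem applied to the analytic map $(\theta,\gamma)\mapsto G(\sigma(\theta),\gamma\sigma_{\rm init})$, almost every $\sigma_{\rm init}$ makes the Jacobian of this map in $(\overline{d\theta},\gamma)$ surjective along the whole zero set. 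Consequently $\Gamma(\sigma_{\rm init})\cap\{\gamma>0\}$ is a one-dimensional analytic manifold, and the differential equation~\eqref{equ_diff} defines its tangent.

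Next we classify the components of this manifold by their endpoints. Three facts are needed.
\begin{enumerate}
\item Boundedness in $\sigma$. Points with $\gamma>0$ satisfy $\mu>0$, so $\sigma>0$. Near $\partial\Delta$, a coordinate $\sigma_i\to0$ forces $\mu_i=\gamma\sigma_{{\rm init},i}\to0$, which is impossible while $\gamma$ is bounded below. Hence the only exits are $\gamma\to+\infty$ and $\gamma\to0^+$.
\item The end $\gamma\to+\infty$. As argued before the theorem, $\sigma\to\sigma_{\rm init}$ there, and for large $\gamma$ the matrix $J+\gamma I$ is invertible. By the implicit function theorem, exactly one branch emanates from infinity.
\item The end $\gamma\to0^+$. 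By Theorem~\ref{thm_section}, every limit point is a solution $\sigma^\star$ of ${\rm VI}(F,\Delta)$. Conversely, at a solution with $J(\sigma^\star)$ nonsingular (and $\sigma^\star$ interior, which should be included in the nondegeneracy), the implicit function theorem applied to \eqref{equ_diff} at $\mu=0$ gives exactly one branch of $\Gamma$ through $(\sigma^\star,\mathbf{0})$, parameterized by $\gamma\in[0,\gamma_0)$, entering $\gamma>0$.
\end{enumerate}

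The components of a compact one-dimensional manifold with boundary are circles or arcs, and each arc has two ends. The ends available here are the single branch at $\gamma=\infty$ plus one end per solution. Pairing them, the total number of ends, $1+N$, must be even, so $N$ is odd. Analyticity enters in ruling out pathological accumulation: a curve of $\Gamma$ cannot oscillate forever in a compact region without converging. Analytic curves have finitely many branches locally, by the Łojasiewicz structure theorem, which guarantees that each component is a genuine arc with well-defined endpoints, even when we do not insist on genericity at interior singular points. There we would instead use a local-degree argument: an analytic set has an even number of half-branches at each interior point.

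The main obstacle is handling solutions on $\partial\Delta$ and singular interior points of $\Gamma$ without a genericity choice. The approach I would try first is the analytic-set route: at any interior point of $\Gamma\cap\{\gamma>0\}$ the number of real half-branches is even, since $G$ is analytic and the local structure is a finite union of Puiseux arcs through the point. Boundary solutions need the nonsingularity of $J(\sigma^\star)$, with the $\check{\mu}$-section description, to show that exactly one half-branch arrives there. A degree-count summing half-branch parities over all points then yields $1+N\equiv0\pmod 2$.
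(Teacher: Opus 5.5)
Your parity skeleton is the paper's: one end of $\Gamma(\sigma_{\rm init})$ at $\gamma\to+\infty$, one end per solution as $\gamma\to0^+$, hence $1+N$ is even. As written, though, you prove a weaker statement. In step (iii) you add the hypothesis that $\sigma^\star$ is interior, and in your last paragraph you leave boundary solutions as an open ``obstacle''. The theorem counts all solutions, and the $\theta$-coordinate implicit function theorem you invoke is unavailable when some $\sigma^\star_i=0$.

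The missing observation is that the stated hypothesis already covers this case. If $\sigma^\star_i=0$, then $(I-\mathbf 1\sigma^{\star\top})e_i=e_i$ and $\bigl(\frac{\partial F}{\partial\sigma}\circ\sigma^\star\bigr)e_i=0$. So \eqref{equ_j} gives $J(\sigma^\star)e_i=\bigl(F_i(\sigma^\star)-\min_jF_j(\sigma^\star)\bigr)e_i$. Thus nonsingularity of $J(\sigma^\star)$ contains strict complementarity $F_i(\sigma^\star)>\min_jF_j(\sigma^\star)$ on $Z=\{i:\sigma^\star_i=0\}$, together with nonsingularity of the remaining block on the support.

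Now apply the implicit function theorem in $(\sigma,v)$ rather than $\theta$. Use the system $\sigma\circ(F(\sigma)+v\mathbf 1)=\gamma\sigma_{\rm init}$, $\mathbf 1^\top\sigma=1$ at $(\sigma^\star,-\min_jF_j(\sigma^\star))$, $\gamma=0$. The rows indexed by $Z$ reduce to $(F_i(\sigma^\star)-\min_jF_j(\sigma^\star))e_i^\top$, so the Jacobian is block triangular and invertible. You get a unique solution curve through $\sigma^\star$ on which $\sigma_i(\gamma)=\gamma\sigma_{{\rm init},i}/(F_i+v)>0$ for $i\in Z$ and small $\gamma>0$. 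Since $v=\gamma-\sigma^\top F(\sigma)$, every point of $\Gamma$ near $(\sigma^\star,\mathbf 0)$ lies on this curve. So exactly one end arrives at each boundary solution. Boundary solutions are also isolated, which you need for $N<\infty$.

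Beyond this, your treatment of the curve's interior genuinely differs from the paper's. The paper keeps $\sigma_{\rm init}$ arbitrary and relies on analyticity: finiteness of the zero set for fixed $\mu$, and Newton--Puiseux continuation through singular points of $\Gamma$. You instead make $\Gamma\cap\{\gamma>0\}$ a smooth $1$-manifold by choosing $\sigma_{\rm init}$ generically. That is legitimate, with two repairs:
\begin{itemize}
\item Sard must be applied with $\sigma_{\rm init}$ as a variable (parametric transversality). For $\gamma>0$ and $d\sigma_{\rm init}\in\mathbf 1^\perp$, the derivative of $G(\sigma,\gamma\sigma_{\rm init})$ is $-\gamma\,d\sigma_{\rm init}$, which is onto the target $\mathbf 1^\perp$.
\item Drop your first requirement, that $\gamma\sigma_{\rm init}$ be a regular value of $(\sigma,v)\mapsto\sigma\circ F(\sigma)+v\sigma$ for every $\gamma$. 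Folds of $\gamma$ along $\Gamma$ (points of $\Gamma\cap S$, where \eqref{equ_diff} does not give the tangent) survive small perturbations of $\sigma_{\rm init}$; this is exactly why the paper needs fiber jumps. Only surjectivity of the Jacobian in $(\overline{d\theta},\gamma)$ is needed.
\end{itemize}

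What your route buys is that analyticity becomes unnecessary; $F\in C^2$ suffices. Your last paragraph becomes unnecessary too, and its half-branch parity claim would really need Sullivan's theorem rather than ``a finite union of Puiseux arcs''. Instead, an end with $\gamma\to0^+$ has a compact connected limit set inside the finite set of isolated solutions. It therefore converges to one of them, where the unique implicit-function branch absorbs it. Your fact (i) also makes explicit the exclusion of exits through $\partial\Delta$, which the paper's sketch leaves implicit. In exchange, the paper's route aims at every $\sigma_{\rm init}$ rather than almost every one.
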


Since the argument closely follows \cite{oddness2}, with the essential difference that the polynomial equilibrium equation is replaced by the real-analytic fixed-point bundle equation $G(\sigma,\mu)=0$, we only sketch the main points.

For any fixed $\mu$, the solution set of $G(\sigma,\mu)=0$ is finite. Indeed, compactness of $\Delta$ implies that an infinite solution set would have an accumulation point, which would force the real-analytic function $G(\sigma,\mu)$ to vanish identically. Consequently, the analytic curve $\Gamma(\sigma_{\rm init})$ consists of finitely many branches. By the Newton-Puiseux theorem \cite{puiseux}, every local branch of the real-analytic equation $G(\sigma,\gamma\sigma_{\rm init})=0$ admits a convergent Puiseux expansion near every point on it, whether singular or nonsingular. Therefore each branch has a unique analytic continuation through every point.

It follows that the only boundary points of $\Gamma(\sigma_{\rm init})$ occur in the limits $\gamma\to0^+$ and $\gamma\to+\infty$. The former correspond to solutions of ${\rm VI}(F,\Delta)$, while the latter is the unique starting point $\sigma_{\rm init}$. If $J(\sigma)$ is nonsingular at every solution, the implicit function theorem implies that exactly one branch terminates at each solution. Hence the branches of $\Gamma(\sigma_{\rm init})$ pair the solutions with the unique starting point $\sigma_{\rm init}$. The parity argument yields an odd number of solutions.

Our path-following framework does not rely on analyticity of $F$ or the oddness theorem, but the theorem provides additional evidence that the fixed-point bundle naturally generates global solution paths.

\begin{figure}
  \centering
  \includegraphics[width=0.8\textwidth]{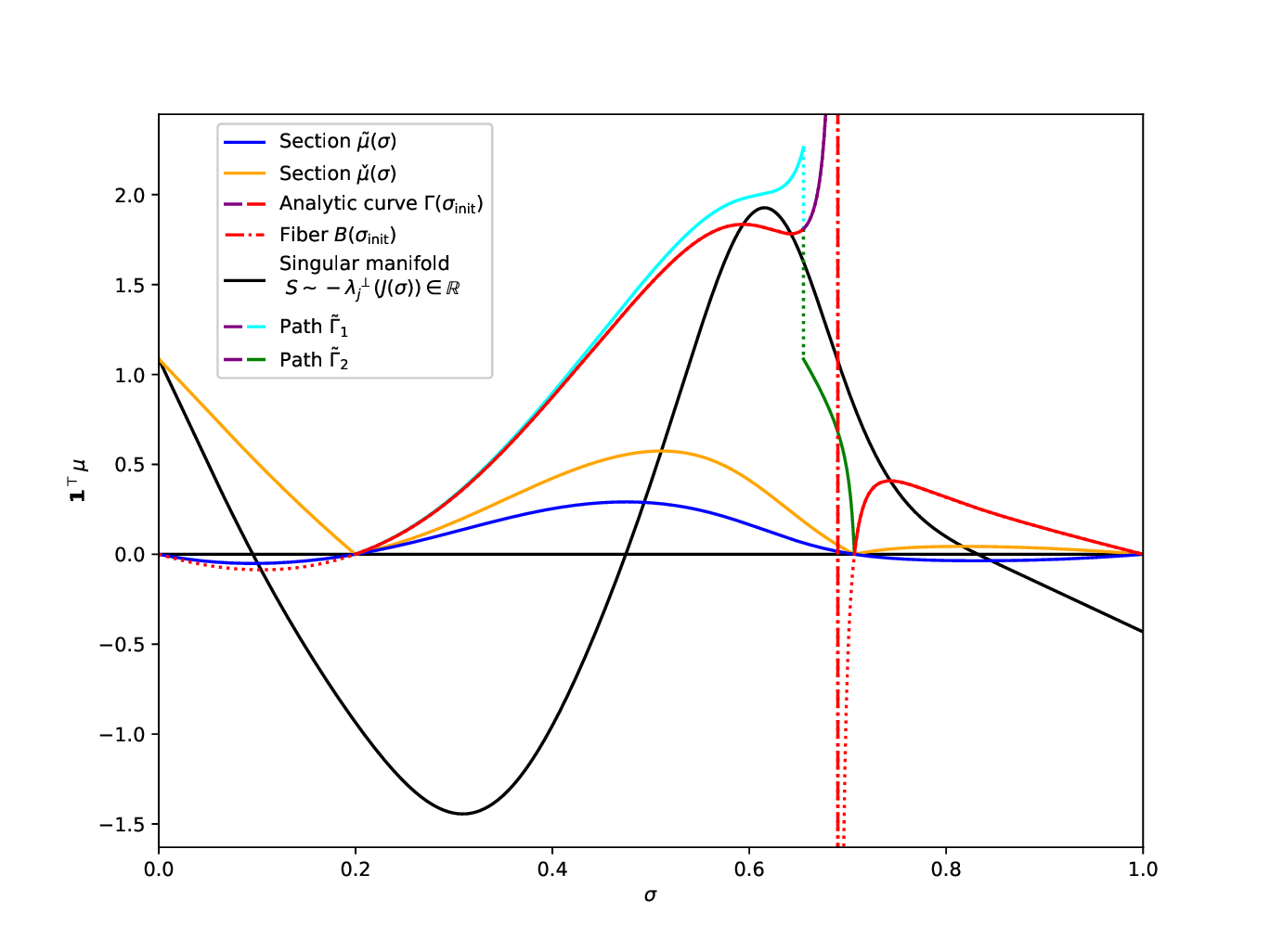}
  \caption{
    The fixed-point bundle $E$ for a two-dimensional ${\rm VI}(F,\Delta)$.
    The horizontal axis represents the simplex coordinate $\sigma$, and the vertical axis represents the fiber coordinate $\mathbf{1}^\top\mu$.
    The smooth section $\tilde{\mu}(\sigma)$ induces the differential structure through $J(\sigma)$, while the nonnegative section $\check{\mu}(\sigma)$ equals the gap function of ${\rm VI}(F,\Delta)$.
    Analytic curves $\Gamma(\sigma_{\rm init})$ connect a designated starting point to solutions as $\mu\to\mathbf{0}^+$.
    Singular points, where path-following cannot proceed, lie on the singular manifold $S$, where $\mathbf{1}^\top\mu$ equals a real eigenvalue of $-J(\sigma)$.
    The path-following trajectory $\tilde{\Gamma}$ bypasses these singularities by moving along fibers, illustrated by a positive jump ($\tilde{\Gamma}_1$) and a negative jump ($\tilde{\Gamma}_2$).
    After finitely many such fiber jumps, $\tilde{\Gamma}$ reaches a solution of ${\rm VI}(F,\Delta)$.
  }
  \label{fig}
\end{figure}

\subsection{The path}

The path we follow is derived from the analytic curve $\Gamma(\sigma_{\rm init})$ and singularity avoidance along the fiber. This is formalized as $\tilde{\Gamma}$ in equation~\eqref{equ_path}. The key idea is that $\tilde{\Gamma}$ is obtained by concatenating analytic curve segments separated by fiber jumps. The starting point $(\sigma_0,\gamma_0\mu_0)\in E$ is given by $\gamma_0\mu_0=v_{\rm init}\sigma_{\rm init}$ with designated $\sigma_{\rm init}$ and sufficiently large $v_{\rm init}$. Then $\tilde{\Gamma}$ consists of segments of different analytic curves $\Gamma(\sigma_{\rm init})$, concatenated in $\sigma$ but jumping along fibers in $\mu$. It eventually ends at $(\sigma^\star,\mathbf{0})$ with $\mathbf{1}^\top\mu=0$, where $\sigma^\star$ is a solution of ${\rm VI}(F,\Delta)$.
\begin{equation}
  \label{equ_path}
  \begin{aligned}
    \tilde{\Gamma}=\bigcup_{i=1}^{q} \left\{(\sigma,\gamma\mu_i)\in E|\mu_i=\gamma_{i-1}\mu_{i-1}+\beta_i\sigma_{i-1},\gamma\in[\gamma_i,1]\right\}, & \\
    \gamma_0\mu_0=v_{\rm init}\sigma_{\rm init},~\mu_i>0,~\gamma_q\mathbf{1}^\top\mu_q=0                                                             & \\
  \end{aligned}
\end{equation}

We would like $\tilde{\Gamma}\cap S=\varnothing$ for singularity avoidance, where $S$ is the singular manifold. However, $\mathbf{1}^\top\mu=0$ is unavoidable at the solution $(\sigma^\star,\mathbf{0})$. If $J(\sigma^\star)$ is singular, then $(\sigma^\star,\mathbf{0})$ is an unavoidable singular point, that is, $\tilde{\Gamma}$ must intersect $S$. Thus we instead require $\tilde{\Gamma}\cap S\subseteq \{(\sigma^\star,\mathbf{0})\}$ and only avoid nonzero eigenvalues of $-J(\sigma)$. In fact, this can be achieved with only finitely many segments in $\tilde{\Gamma}$.

\begin{proposition}
  \label{thm_path}
  There exists $\tilde{\Gamma}$ consisting of finitely many segments satisfying $\tilde{\Gamma}\cap S\subseteq \{(\sigma^\star,\mathbf{0})\}$.
\end{proposition}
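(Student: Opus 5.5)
The plan is to build $\tilde{\Gamma}$ greedily, one segment at a time, and to bound the number of segments through a compactness argument that gives a uniform amount of progress per segment. Every point $(\sigma,\mu)\in E$ with $\mu>0$ satisfies $\mu=\tilde{\mu}(\sigma)+(\mathbf{1}^\top\mu)\sigma$. So the singular condition $(\sigma,\mu)\in S$ is equivalent to $\mathbf{1}^\top\mu=-\lambda_j^\perp(J(\sigma))$ for some real eigenvalue. By continuity of $F'$ on the compact $\Delta$, the eigenvalues of $J(\sigma)$ are uniformly bounded, say $\lvert\lambda_j^\perp(J(\sigma))\rvert\leq \Lambda$. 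Choosing $v_{\rm init}>\Lambda$ makes the starting fiber region $\mathbf{1}^\top\mu>\Lambda$ free of singular points, and the starting point is then nonsingular.

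On each segment $\{(\sigma,\gamma\mu_i)\}$ I will follow the curve by the implicit function theorem applied to $G(\sigma,\gamma\mu_i)=0$, via equation~\eqref{equ_diff}. Since $J_G$ is nonsingular away from $S$, the segment continues as a $C^1$ curve. Along it, $\mathbf{1}^\top\mu=\gamma\mathbf{1}^\top\mu_i$ decreases strictly. The segment is stopped just before $\mathbf{1}^\top(\gamma\mu_i)$ comes within a prescribed margin of a nonzero eigenvalue of $-J(\sigma)$.

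At the stopping point $(\sigma_{i-1},\gamma_{i-1}\mu_{i-1})$, I will apply a fiber jump $\mu_i=\gamma_{i-1}\mu_{i-1}+\beta_i\sigma_{i-1}$. The jump $\beta_i$ is chosen so that $\mu_i>0$ and $\mathbf{1}^\top\mu_i$ avoids the finitely many (at most $n-1$) eigenvalues of $-J(\sigma_{i-1})$. Such a $\beta_i$ exists because only finitely many values are forbidden and positivity is an open condition. The sign of the jump is chosen so that it moves the fiber coordinate past the offending eigenvalue in the direction of decreasing $\mathbf{1}^\top\mu$ whenever $\mu_i>0$ still allows it; otherwise a small positive jump is used. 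Both options are consistent with Figure~\ref{fig}.

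Once $\mathbf{1}^\top\mu$ is smaller than the smallest modulus of a nonzero eigenvalue along the remaining portion, the segment runs to $\gamma\to0^+$. By Theorem~\ref{thm_section} the limit is $(\sigma^\star,\mathbf{0})$ with $\sigma^\star$ a solution, and the only possible intersection with $S$ is this endpoint.

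The main obstacle is finiteness. A naive construction could accumulate infinitely many jumps as eigenvalues of $J(\sigma)$ approach $0$ near a singular solution, or could chatter near some $\sigma$. I would first handle this as follows. Continuity of the eigenvalues of $J(\sigma)$, taken as a multiset, on compact $\Delta$ together with a fixed margin $\delta>0$ means that each segment either decreases $\mathbf{1}^\top\mu$ by at least a uniform amount depending on $\delta$ and on the uniform bound of $\lVert\overline{d\theta}\rVert$ from~\eqref{equ_diff}, or terminates at $\mu\to0$. Since $\mathbf{1}^\top\mu\leq v_{\rm init}+\sum\lvert\beta_i\rvert$ and the jumps can be taken with $\lvert\beta_i\rvert\leq2\delta$, a net decrease per segment gives a finite count.

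Near eigenvalues tending to zero, I would relax the requirement so that only eigenvalues with $\lvert\lambda\rvert\geq\delta'$ are avoided on the final stretch, and eigenvalues close to $0$ are admitted only at the endpoint. This step is exactly where the conclusion $\tilde{\Gamma}\cap S\subseteq\{(\sigma^\star,\mathbf{0})\}$, rather than $\tilde{\Gamma}\cap S=\varnothing$, is used. Making this rigorous will likely require a perturbation argument, such as transversality in $\beta_i$ or the finiteness of branches from analyticity. I expect this to be the delicate step.
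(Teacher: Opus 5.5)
Your proposal has a genuine gap, and it lies in the finiteness of the number of segments, which is the whole content of the proposition.

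Once you allow positive jumps, the fiber coordinate $\mathbf{1}^\top\mu$ is no longer monotone along $\tilde{\Gamma}$. A net decrease per segment would then require the guaranteed decrease along each segment to exceed the largest positive jump, and nothing you state ensures this. Several of the supporting steps also fail:
\begin{enumerate}
  \item The bound $\lvert\beta_i\rvert\leq2\delta$ is unsupported. Up to $n-1$ eigenvalues of $-J(\sigma)$ can cluster, and reaching a point at margin $\delta$ from all of them may need a jump of order $n\delta$.
  \item The estimate $\mathbf{1}^\top\mu\leq v_{\rm init}+\sum\lvert\beta_i\rvert$ is circular, because its right-hand side grows with the number of segments.
  \item Uniform progress per segment needs a uniform lower bound on $s_{\min}(J_G)$. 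A margin measured against real eigenvalues does not give one, since $J(\sigma)$ is non-normal and can have complex pairs with small imaginary part.
  \item The bound on $\lVert\overline{d\theta}\rVert$ per unit decrease of $\mathbf{1}^\top\mu$ that follows from~\eqref{equ_diff} scales like $\lVert\mu/\sigma\rVert/\mathbf{1}^\top\mu$, which degenerates as $\mathbf{1}^\top\mu\to0$.
\end{enumerate}
Most importantly, your endgame is not a proof. Avoiding only eigenvalues with $\lvert\lambda\rvert\geq\delta'$ lets $\tilde{\Gamma}$ meet $S$ at points with $0<\mathbf{1}^\top\mu<\delta'$, which is exactly what the statement excludes. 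The perturbation argument meant to rule this out is left unspecified.

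The idea you are missing, on which the paper's proof rests, is that a negative jump is always admissible. For $(\sigma,\mu)\in E$ with $\mu>0$ one has $\mu=\sigma\circ r$ with $r>0$, so $\mu+\beta\sigma=\sigma\circ(r+\beta\mathbf{1})>0$ for all $\beta\in(-\min_i r_i,0)$. This open interval contains values for which $\mathbf{1}^\top\mu+\beta$ avoids the finitely many nonzero eigenvalues of $-J(\sigma)$. Using only such jumps makes $\gamma_i\mathbf{1}^\top\mu_i$ decrease monotonically across the whole construction. You then need neither a balance between segment progress and positive jumps nor a uniform margin.

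The paper finishes by contradiction. Infinitely many segments would give a convergent decreasing sequence of jump levels, with infinitely many avoidance events accumulating at $(\sigma^\star,\mathbf{0})$. This is impossible because only nonzero eigenvalues are avoided. That final step is brief in the paper. It is transparent when $J(\sigma^\star)$ is nonsingular, since then a neighborhood of $(\sigma^\star,\mathbf{0})$ misses $S$. Your concern about small nonzero eigenvalues of $J(\sigma)$ near a singular $\sigma^\star$ is a fair one, but a proof has to settle it rather than defer it.
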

\begin{proof}
  Suppose we adopt a singularity avoidance procedure that always uses $\beta_i<0$ to avoid the nonzero eigenvalues of $-J(\sigma)$. This procedure is admissible. Indeed, for $(\sigma,\mu)\in E$ with $\mu>0$, we have $\mu=\sigma\circ r$ with $r>0$. Then there always exists $\beta_i<0$ such that $\mu_i=\gamma_{i-1}\mu_{i-1}+\beta_i\sigma_{i-1}=\sigma_{i-1}\circ(\gamma_{i-1} r_{i-1}+\beta_i\mathbf{1})>0$ as long as $\gamma_{i-1}\mu_{i-1}>0$. Thus the procedure is admissible, and it generates a decreasing sequence of $\gamma_i\mathbf{1}^\top\mu_i$.

  Suppose, for contradiction, that infinitely many segments in $\tilde{\Gamma}$ are needed to achieve $\tilde{\Gamma}\cap S\subseteq \{(\sigma^\star,\mathbf{0})\}$. The infinite sequence $\gamma_i\mathbf{1}^\top\mu_i$ converges because it is bounded below by $0$. The limit must be $0$ because there always exists $\beta_i<0$ for singularity avoidance that ensures decreasing as long as $\gamma_{i-1}\mu_{i-1}>0$. Hence infinitely many singularity avoidance events would occur in an arbitrarily small neighborhood of $(\sigma^\star,\mathbf{0})$. This yields a contradiction because we only avoid nonzero eigenvalues of $-J(\sigma)$.
\end{proof}

Therefore, $\tilde{\Gamma}\cap S\subseteq \{(\sigma^\star,\mathbf{0})\}$ with finitely many segments, where $\tilde{\Gamma}\cap S=\varnothing$ if $J(\sigma^\star)$ is nonsingular, and $\tilde{\Gamma}\cap S=\{(\sigma^\star,\mathbf{0})\}$ if $J(\sigma^\star)$ is singular. In other words, except possibly at the solution, singular points can always be bypassed through finitely many fiber jumps, so singularities do not obstruct global convergence.

\subsection{A compact nonsingular neighborhood for path-following}

In addition to $\tilde{\Gamma}$ avoiding $S$, we also need a neighborhood of $\tilde{\Gamma}$ that avoids $S$, because the corrector is a subiteration that converges onto the path $\tilde{\Gamma}$ within its neighborhood. We introduce the neighborhood $\mathcal{N}_{\tilde{\Gamma}}(\zeta)$ in equation~\eqref{equ_neighborhood}. Its purpose is to provide a compact nonsingular neighborhood on which path-following is well-behaved, allowing us to bound the relevant quantities.
\begin{equation}
  \label{equ_neighborhood}
  \mathcal{N}_{\tilde{\Gamma}}(\zeta)={\rm Closure}\left(\left\{({\rm softmax}(\theta),\mu)|\left\lVert\theta-\ln\sigma_\mu\right\rVert\leq d,(\sigma_\mu,\mu)\in\tilde{\Gamma},\mathbf{1}^\top\mu>\zeta\right\}\right)
\end{equation}

The set $\mathcal{N}_{\tilde{\Gamma}}(\zeta)$ is the union, over all $(\sigma_\mu,\mu)\in\tilde{\Gamma}$ with $\mathbf{1}^\top\mu>\zeta$, of $d$-radius neighborhoods of $\sigma_\mu$ in the $\ln\sigma$-coordinate. Here $\zeta\geq0$ is the prescribed precision marking whether path-following ends exactly at $(\sigma^\star,\mathbf{0})$ or slightly before it if the solution is singular. Taking the closure ensures compactness.

The $\ln\sigma$-coordinate in the definition of $\mathcal{N}_{\tilde{\Gamma}}(\zeta)$ aligns with the parameterization $\sigma={\rm softmax}(\theta)$, yielding a neighborhood of $\sigma_\mu$ that remains well-behaved near the simplex boundary. Note that $\lVert\theta-\ln\sigma_\mu\rVert\leq d$ is equivalent to $\mathrm{e}^{-d}-1\leq(\sigma_\mu-\sigma)/\sigma\leq \mathrm{e}^d-1$. The $\ln\sigma$-coordinate strongly resembles the local norm $\lVert d\sigma\rVert_{{\rm diag}(1/\sigma^2)}=d\sigma^\top{\rm diag}(1/\sigma^2)d\sigma$ used in interior point methods, where ${\rm diag}(1/\sigma^2)$ is the Hessian of the log barrier $-\ln\sigma$. In interior point methods, the neighborhood defined by the Hessian-induced local norm is called the Dikin ellipsoid \cite{dikin}.

We prove $\mathcal{N}_{\tilde{\Gamma}}(\zeta)\cap S=\varnothing$ using compactness arguments. In subsequent developments, we repeatedly use that the image of a compact set under a continuous function is compact, and compact sets are bounded.
\begin{proposition}
  \label{thm_neighborhood}
  Let $\zeta\neq0$ if $J(\sigma^\star)$ is singular.
  Then for any $\zeta\geq0$, there exists $d>0$ such that $\mathcal{N}_{\tilde{\Gamma}}(\zeta)\cap S=\varnothing$.
\end{proposition}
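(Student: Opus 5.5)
The plan is a compactness argument in the $(\theta,\mu)$ coordinates, combined with continuity of the map that detects singularity. Introduce the function
\[
\phi(\sigma,\mu)=\bigl|\det\bigl(J(\sigma)+(\mathbf{1}^\top\mu)I\bigr)\bigr|,
\]
where $k$ in $J(\sigma)$ is fixed with $k+\mathbf{1}^\top\mu\neq0$ along the relevant range. The cleanest route is to work on $\ker(\sigma^\top)$: restrict $J_0(\sigma)+(\mathbf{1}^\top\mu)I$ to this invariant subspace (Proposition~\ref{thm_eigenvalue}), which removes the $k$-eigenvalue. By the discussion after \eqref{equ_singular}, for $(\sigma,\mu)\in E$ we have $(\sigma,\mu)\in S$ if and only if $\phi(\sigma,\mu)=0$. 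Since $F$ is $C^1$, $J(\sigma)$ is continuous on $\Delta$, so $\phi$ is continuous. Moreover $\phi$ depends on $\mu$ only through $\mathbf{1}^\top\mu$.

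Next I would show that the path portion $P=\{(\sigma_\mu,\mu)\in\tilde\Gamma:\mathbf{1}^\top\mu\geq\zeta\}$ is compact and that $\phi>0$ on it.

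\textbf{Boundedness of $\mu$.} The path has finitely many segments (Proposition~\ref{thm_path}), and $\mathbf{1}^\top\mu\leq v_{\rm init}+\sum|\beta_i|$, so $\mu$ is bounded.

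\textbf{Interior of the simplex.} Along the path, $\mu>0$ whenever $\mathbf{1}^\top\mu>0$. Since $\mu=\sigma\circ r$ with $r$ bounded, $\sigma_i\geq\mu_i/\|r\|_\infty$. For $\mathbf{1}^\top\mu\geq\zeta>0$ each segment is a closed $\gamma$-interval $[\max(\gamma_i,\zeta/\mathbf{1}^\top\mu_i),1]$ with $\mu=\gamma\mu_i$ and $\mu_i>0$. Hence $\min_j\mu_j$ is bounded below, and $\sigma_\mu$ stays in a compact subset of the open simplex. This makes $\ln\sigma_\mu$ bounded.

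\textbf{Closedness.} Closedness follows from continuity of $G$ on each segment.

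\textbf{Positivity of $\phi$ on $P$.} By Proposition~\ref{thm_path}, $\tilde\Gamma\cap S\subseteq\{(\sigma^\star,\mathbf{0})\}$. When $\zeta>0$, the excluded point is not in $P$. When $\zeta=0$, the hypothesis says $J(\sigma^\star)$ is nonsingular, so $\phi>0$ at $(\sigma^\star,\mathbf{0})$ as well.

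\textbf{The case $\zeta=0$.} This is the main obstacle, since $\mu\to\mathbf{0}$ and so $\sigma_\mu$ may approach the boundary of $\Delta$, making $\ln\sigma_\mu$ unbounded. I would handle it by arguing directly in $\theta$-coordinates. The perturbation $\|\theta-\ln\sigma_\mu\|\leq d$ is multiplicative, $\sigma/\sigma_\mu\in[\mathrm e^{-d},\mathrm e^{d}]$ componentwise, so $\|\sigma-\sigma_\mu\|\leq(\mathrm e^{d}-1)\|\sigma_\mu\|\leq \mathrm e^{d}-1$ uniformly. Thus, even near the boundary, the $\theta$-ball maps into a Euclidean ball of radius $\mathrm e^d-1$ in $\Delta$ around $\sigma_\mu$. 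The closure then lies inside $\{(\sigma,\mu):\sigma\in\Delta,\ \|\sigma-\sigma_\mu\|\leq \mathrm e^d-1,\ (\sigma_\mu,\mu)\in\overline{P}\}$, which is compact because $\Delta$ is compact and $\overline P$ is compact (adding the limit point $(\sigma^\star,\mathbf{0})$).

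\textbf{Choosing $d$.} Since $\phi$ is continuous on the compact set $\Delta\times\{\mu:\|\mu\|\leq C\}$, it is uniformly continuous there. Let $m=\min_{\overline P}\phi>0$ and choose $\delta$ such that $\|(\sigma,\mu)-(\sigma',\mu')\|\leq\delta$ implies $|\phi(\sigma,\mu)-\phi(\sigma',\mu')|<m$. In $\mathcal N_{\tilde\Gamma}(\zeta)$, $\mu$ is taken exactly from the path and only $\sigma$ is perturbed, so it suffices to pick $d$ with $\mathrm e^d-1\leq\delta$. Then $\phi>0$ on $\mathcal N_{\tilde\Gamma}(\zeta)$, including its closure by continuity, since limits stay at distance at most $\delta$ from $\overline P$.

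\textbf{Interpreting the conclusion.} Points of $\mathcal N$ need not lie on $E$. I would therefore read $\mathcal N_{\tilde\Gamma}(\zeta)\cap S=\varnothing$ as nonsingularity of $J(\sigma)+(\mathbf{1}^\top\mu)I$ on $\mathcal N$, which is exactly what $\phi>0$ gives. The membership condition of $S$, namely $\mathbf{1}^\top\mu=-\lambda_j^\perp(J(\sigma))$, is precisely $\phi=0$.
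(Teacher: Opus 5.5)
Your proof is correct, but it takes a different route from the paper's. The paper argues purely by Euclidean distance in $(\sigma,\mu)$-space. It shows that the truncated path $\tilde{\Gamma}'$ and the singular manifold $S$ are compact and disjoint, so they are separated by some $d_0>0$. It then uses the bound $\lVert\sigma\circ(I-\mathbf{1}\sigma^\top)\rVert\leq 1/2$ on the softmax Jacobian: every point of the $\theta$-neighborhood lies within $d/2$ of a path point with the same $\mu$, and choosing $d<2d_0$ finishes.

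You instead encode singularity by the continuous function $\phi$, which depends only on $(\sigma,\mathbf{1}^\top\mu)$ through the characteristic polynomial of $J_0(\sigma)$ restricted to $\ker(\sigma^\top)$. You take its positive minimum on the compact $\overline{P}$ and conclude by uniform continuity.

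Your route buys two things.
\begin{enumerate}
\item It proves nonsingularity of $J(\sigma)+(\mathbf{1}^\top\mu)I$ at every point of $\mathcal{N}_{\tilde{\Gamma}}(\zeta)$. This is what Lemma~\ref{thm_bounds}(i) actually needs. The paper's choice $d<2d_0$ only gives disjointness from $S$ as subsets of $\Delta\times\mathbb{R}^n$. Since points of $\mathcal{N}_{\tilde{\Gamma}}(\zeta)$ generally lie off $E$, that disjointness alone does not imply nonsingularity there. Repairing it would require shrinking $d$ further using continuity of $\tilde{\mu}$, or running the distance argument in the coordinates $(\sigma,\mathbf{1}^\top\mu)$, which is essentially what your $\phi$ does.
\item It avoids the paper's justification that $S$ is compact because the real-eigenvalue maps $\sigma\mapsto\lambda_j^\perp(J(\sigma))$ are continuous. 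That justification is delicate, since real eigenvalues can merge into complex pairs. Your zero set of a continuous polynomial is automatically closed.
\end{enumerate}

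The paper's argument is shorter. Its Lipschitz bound on softmax is also cleaner than your componentwise ratio bound, which is slightly off. Because of the normalization, $\lVert\theta-\ln\sigma_\mu\rVert\leq d$ only gives $\sigma/\sigma_\mu\in[\mathrm{e}^{-2d},\mathrm{e}^{2d}]$, so you should use $\mathrm{e}^{2d}-1$ in place of $\mathrm{e}^{d}-1$. This is harmless, since you only need the bound to vanish as $d\to0$.

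Your step keeping $\sigma_\mu$ away from the boundary of $\Delta$ is not used in your final argument and can be dropped.
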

\begin{proof}
  Denote $\tilde{\Gamma}'=\{(\sigma,\mu)\in\tilde{\Gamma}|\mathbf{1}^\top\mu\geq\zeta\}$.
  If $J(\sigma^\star)$ is nonsingular, $\zeta=0$ and $\tilde{\Gamma}'=\tilde{\Gamma}$.
  If $J(\sigma^\star)$ is singular, $\zeta>0$ and $\tilde{\Gamma}'$ is the truncation of $\tilde{\Gamma}$ at $\mathbf{1}^\top\mu=\zeta$.
  Thus $\tilde{\Gamma}'\cap S=\varnothing$.
  Since $\tilde{\Gamma}'$ and $S$ are compact and disjoint, there is a positive gap between them.

  Each analytic curve segment in $\tilde{\Gamma}'$ is compact as the image of a compact interval of $\gamma$ under a continuous map. A finite union of compact sets is compact, so $\tilde{\Gamma}'$ is compact. The map $\sigma\mapsto(\sigma,\tilde{\mu}(\sigma)-\lambda_j^\perp(J(\sigma))\sigma)$ is continuous because $\tilde{\mu}(\sigma)$ is continuous and the eigenvalue mapping $\lambda_j^\perp(J(\sigma))$ is continuous. Hence $S$ is compact as the image of compact $\Delta$ under a continuous function.

  Since $\tilde{\Gamma}'\cap S=\varnothing$ and both are compact, there exists $d_0>0$ such that for any $(\sigma_\mu,\mu)\in\tilde{\Gamma}'$ and $(\sigma_s,\mu_s)\in S$,
  $$\left\lVert(\sigma_\mu,\mu)-(\sigma_s,\mu_s)\right\rVert\geq d_0.$$

  Let $\mathcal{N}_{\tilde{\Gamma}}^\prime(\zeta)$ be the set that differs from $\mathcal{N}_{\tilde{\Gamma}}(\zeta)$ only by not taking the closure operation. For any $(\sigma_n,\mu)\in\mathcal{N}_{\tilde{\Gamma}}^\prime(\zeta)$, there exists $(\sigma_\mu,\mu)\in\tilde{\Gamma}'$ such that
  \begin{align*}
         & \left\lVert(\sigma_n,\mu)-(\sigma_\mu,\mu)\right\rVert=\left\lVert{\rm softmax}(\theta_n)-{\rm softmax}(\ln\sigma_\mu)\right\rVert                   \\
    \leq & \left(\sup_{\sigma\in\Delta}\left\lVert\sigma\circ(I-\mathbf{1}\sigma^\top)\right\rVert\right)\left\lVert\theta_n-\ln\sigma_\mu\right\rVert\leq d/2,
  \end{align*}
  where $\sigma\circ(I-\mathbf{1}\sigma^\top)$ is the Jacobian of ${\rm softmax}$, whose norm is bounded by $1/2$ on $\Delta$.

  For any $(\sigma_n,\mu)\in\mathcal{N}_{\tilde{\Gamma}}^\prime(\zeta)$ and $(\sigma_s,\mu_s)\in S$,
  $$\left\lVert(\sigma_n,\mu)-(\sigma_s,\mu_s)\right\rVert\geq\left\lVert(\sigma_\mu,\mu)-(\sigma_s,\mu_s)\right\rVert-\left\lVert(\sigma_n,\mu)-(\sigma_\mu,\mu)\right\rVert\geq d_0-d/2.$$

  Choose $d\in(0,2d_0)$. Then $\mathcal{N}_{\tilde{\Gamma}}^\prime(\zeta)$ and $S$ have positive gap $d_0-d/2>0$. If two sets have positive gap, their closures also have positive gap. Hence $\mathcal{N}_{\tilde{\Gamma}}(\zeta)\cap S=\varnothing$.
\end{proof}

The main purpose of constructing $\mathcal{N}_{\tilde{\Gamma}}(\zeta)$ is to bound two quantities that will appear in the algorithm.
\begin{lemma}
  \label{thm_bounds}
  Let $\zeta\neq0$ if $J(\sigma^\star)$ is singular. Assume that $F\in C^1(p_\Delta(\mathcal{N}_{\tilde{\Gamma}}(\zeta)))$. Then for any $\zeta\geq0$, the following quantities are bounded on $\mathcal{N}_{\tilde{\Gamma}}(\zeta)$.
  \begin{enumerate}
    \item The smallest singular value $s_{\min}(J(\sigma)+(\mathbf{1}^\top\mu) I)$ attains a lower bound on $\mathcal{N}_{\tilde{\Gamma}}(\zeta)$.
    \item $\lVert\mu/\sigma\rVert$ attains an upper bound on $\mathcal{N}_{\tilde{\Gamma}}(\zeta)$.
  \end{enumerate}
\end{lemma}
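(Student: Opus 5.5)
The approach is the compactness argument the paper keeps using: each quantity is a continuous function on the compact set $\mathcal{N}_{\tilde{\Gamma}}(\zeta)$, so it attains its extrema there. What remains is to check that (i) the minimum is strictly positive and (ii) the function in question really is continuous, including near the boundary of the simplex.

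For (i), I would first note that $\mathcal{N}_{\tilde{\Gamma}}(\zeta)$ is compact. The set $\tilde{\Gamma}'$ from the proof of Proposition~\ref{thm_neighborhood} is compact. Its $\mu$-components are therefore bounded, and $d$-balls in the $\theta$-coordinate around bounded data give a bounded set, whose closure is compact. Under the hypothesis $F\in C^1(p_\Delta(\mathcal{N}_{\tilde{\Gamma}}(\zeta)))$, the matrix $J(\sigma)$ in \eqref{equ_j} is continuous in $\sigma$, being built from $\partial F/\partial\sigma$, $F$ and $\sigma$ by products and sums. Hence $(\sigma,\mu)\mapsto J(\sigma)+(\mathbf{1}^\top\mu)I$ is continuous. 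Since singular values depend continuously on the matrix (Weyl's inequality), $s_{\min}(J(\sigma)+(\mathbf{1}^\top\mu)I)$ is continuous and attains a minimum $s_0$ on the compact set.

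Next I show $s_0>0$. Take $k\neq 0$, chosen so that $-k$ avoids the relevant values. By Proposition~\ref{thm_eigenvalue}, the eigenvalues of $J(\sigma)+(\mathbf{1}^\top\mu)I$ are $k+\mathbf{1}^\top\mu$ together with $\lambda_j^\perp(J(\sigma))+\mathbf{1}^\top\mu$. The matrix is singular exactly when one of these vanishes. For the second family, vanishing means $(\sigma,\tilde{\mu}(\sigma)+(\mathbf{1}^\top\mu)\sigma)\in S$; this is the identification made right after \eqref{equ_singular}.

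A subtlety arises here, because points of $\mathcal{N}_{\tilde{\Gamma}}(\zeta)$ are not on the bundle: $\mu$ is inherited from the path point while $\sigma$ is perturbed. So I would argue with the fiber coordinate $\mathbf{1}^\top\mu$ instead. Compactness of $\tilde{\Gamma}'$ and continuity of the real eigenvalues give a positive gap $\delta$ between $\mathbf{1}^\top\mu$ and $-\lambda_j^\perp(J(\sigma_\mu))$ along the path. Uniform continuity of $\sigma\mapsto\lambda_j^\perp(J(\sigma))$ on compact sets, together with shrinking $d$ if necessary, as in Proposition~\ref{thm_neighborhood}, keeps this gap at least $\delta/2$ throughout the neighborhood. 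For the $k$-eigenvalue, $\mathbf{1}^\top\mu$ ranges in a bounded interval, so $k$ is chosen outside $-[\min\mathbf{1}^\top\mu,\max\mathbf{1}^\top\mu]$.

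Thus no point is singular, and the continuous positive function $s_{\min}$ attains a positive minimum. The main obstacle is this step: making precise that ``disjoint from $S$'' controls nonsingularity at off-bundle points. Eigenvalues that are complex, or that collide, need care, and I would handle them by working with $\min_\lambda\lvert\lambda+\mathbf{1}^\top\mu\rvert$ over all eigenvalues, which is continuous.

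For (ii), on $\tilde{\Gamma}'$ we have $\mu/\sigma_\mu=r>0$. On the neighborhood, the relation $\mathrm{e}^{-d}\le\sigma_\mu/\sigma\le \mathrm{e}^{d}$ gives $\lVert\mu/\sigma\rVert\le \mathrm{e}^{d}\lVert\mu/\sigma_\mu\rVert$. It remains to bound $\mu/\sigma_\mu=F(\sigma_\mu)+v\mathbf{1}$ on the compact path. $F$ is bounded there, and $v$ is bounded: $v\in[-\min_i F_i,\,-\min_i F_i+\mathbf{1}^\top\mu]$ by the interval in the proof of Proposition~\ref{thm_brouwer_continuous}, and $\mathbf{1}^\top\mu$ is bounded on the compact $\tilde{\Gamma}'$. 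This bound passes to the closure, so the continuous function $\lVert\mu/\sigma\rVert$ attains its maximum.
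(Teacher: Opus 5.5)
Your proposal is correct and takes the paper's route. For (i), both arguments use continuity of $s_{\min}(J(\sigma)+(\mathbf{1}^\top\mu)I)$ on the compact set $\mathcal{N}_{\tilde{\Gamma}}(\zeta)$. For (ii), both use the ratio bound $\mathrm{e}^{-d}\leq\sigma_\mu/\sigma\leq\mathrm{e}^{d}$ together with boundedness of $F$ and $v$.

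Your extra steps refine the paper's argument rather than replace it. The paper's proof of (i) stops at ``attains a minimum'' and relies tacitly on Proposition~\ref{thm_neighborhood} for positivity. You make positivity explicit: you choose $k$ so that $k+\mathbf{1}^\top\mu\neq0$, and you extend nonsingularity from $\tilde{\Gamma}'$ to the off-bundle points of the neighborhood by uniform continuity, shrinking $d$ if needed. In (ii), you bound $v$ by the bisection interval instead of by continuity of the Brouwer function.
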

\begin{proof}
  (i) Given that $\partial F/\partial\sigma$ is continuous on $p_\Delta(\mathcal{N}_{\tilde{\Gamma}}(\zeta))$, the matrix $J(\sigma)+(\mathbf{1}^\top\mu) I$ is continuous on $\mathcal{N}_{\tilde{\Gamma}}(\zeta)$. Then $s_{\min}(J(\sigma)+(\mathbf{1}^\top\mu) I)$ is continuous on the compact $\mathcal{N}_{\tilde{\Gamma}}(\zeta)$, hence attains a minimum.

  (ii) We have
  $$\mu/\sigma=(\sigma_\mu/\sigma)\circ r_\mu=(\sigma_\mu/\sigma)\circ(F(\sigma_\mu)+v_\mu\mathbf{1}).$$
  The ratio is bounded by $\mathrm{e}^{-d}\leq\sigma_\mu/\sigma\leq \mathrm{e}^d$ on $\mathcal{N}_{\tilde{\Gamma}}(\zeta)$. The term $F(\sigma_\mu)$ is bounded by continuity on the compact $p_\Delta(\mathcal{N}_{\tilde{\Gamma}}(\zeta))$. The term $v_\mu$ is bounded by continuity of the Brouwer function $(\hat{\sigma},r,v)=M(\sigma,\mu)$ on the compact $\mathcal{N}_{\tilde{\Gamma}}(\zeta)$. Indeed, $(\hat{\sigma},r,v)=M(\sigma,\mu)$ is continuous on the $\mu>0$ part of $\mathcal{N}_{\tilde{\Gamma}}(\zeta)$ by the same argument as in Proposition~\ref{thm_brouwer_continuous}, and the limit as $\mu\to\mathbf{0}^+$ exists and yields $v\to-\min_i F_i(\sigma)$.
\end{proof}

In Lemma~\ref{thm_bounds}, $p_\Delta(\mathcal{N}_{\tilde{\Gamma}}(\zeta))$ denotes the projection of $\mathcal{N}_{\tilde{\Gamma}}(\zeta)$ onto the $\sigma$-space. Since we only need bounds on $\mathcal{N}_{\tilde{\Gamma}}(\zeta)$, continuous differentiability of $F$ on $p_\Delta(\mathcal{N}_{\tilde{\Gamma}}(\zeta))$ suffices.

Denote the lower bound of $s_{\min}(J(\sigma)+(\mathbf{1}^\top\mu) I)$ on $\mathcal{N}_{\tilde{\Gamma}}(\zeta)$ by $s_{\min}(J_G)$. If $J(\sigma^\star)$ is nonsingular, this bound is independent of $\zeta$ and uniform over the entire path-following process until reaching $(\sigma^\star,\mathbf{0})$. If $J(\sigma^\star)$ is singular, the bound is uniform only over the part where $\mathbf{1}^\top\mu\geq\zeta>0$, and $s_{\min}(J_G)\to0$ as $\zeta\to0$. Thus for a singular solution, the subsequent convergence result applies only to path-following that ends at $\mathbf{1}^\top\mu=\zeta$ with a prescribed $\zeta>0$.

\section{Path-following on the fixed-point bundle}

\subsection{The predictor-corrector framework}

The fixed-point bundle provides a path connecting an arbitrary starting point to a solution of ${\rm VI}(F,\Delta)$. We now translate this geometric structure into a predictor-corrector algorithm, shown in Algorithm~\ref{algo}. The predictor follows the tangent direction of the bundle. The corrector restores feasibility with respect to the bundle equation. Singularities are avoided by moving along bundle fibers.

\begin{algorithm}
  \caption{Path-following on the fixed-point bundle}
  \label{algo}
  \begin{algorithmic}[1]
    \Require A smooth map $F:\Delta\to\mathbb{R}^n$ and its derivative $\partial F/\partial\sigma$, a designated starting point $\sigma_{\rm init}\in\Delta$, and a desired precision $\epsilon>0$
    \State Set $(\sigma_0,\mu_0)=(\sigma_{\rm init},v_{\rm init}\sigma_{\rm init})$ for a sufficiently large $v_{\rm init}$
    \Repeat
    \Repeat
    \State Compute $(\hat{\sigma}_k,r_k,v_k)=M(\sigma_k,\mu_t)$ by solving bisection problem \eqref{equ_bisection}
    \State Construct the matrix $J(\sigma_k)$ in equation~\eqref{equ_j}
    \State Solve regularized Newton equation~\eqref{equ_regu_kkt} or~\eqref{equ_regu_barr} for $\overline{d\theta}_k$ with $\delta_H=\lVert G(\sigma_k,\mu_t)\rVert/n$
    \State Update $\sigma_{k+1}={\rm softmax}(\ln\sigma_k+\overline{d\theta}_k)$
    \Until{$\lVert G(\sigma_k,\mu_t)\rVert\leq\epsilon/n$}
    \State Set $\check{\mu}_t=\mu_t+\beta_t\sigma_k>0$ such that $\mathbf{1}^\top\mu_t+\beta_t$ avoids the nonzero eigenvalues of $-J(\sigma_k)$
    \State Update and truncate $\mu_{t+1}=((1-\eta_t)\check{\mu}_t).{\rm clip}(\min=\epsilon/n)$ with a sufficiently small $\eta_t$
    \State Solve differential equation~\eqref{equ_diff} for $\overline{d\theta}_t$ with $d\mu_t=\mu_{t+1}-\check{\mu}_t$
    \State Set $\sigma_0={\rm softmax}(\ln\sigma_k+\overline{d\theta}_t)$
    \Until{$\mathbf{1}^\top\check{\mu}(\sigma_k)\leq\epsilon$}\\
    \Return $\sigma_k$ as an approximate solution of ${\rm VI}(F,\Delta)$
  \end{algorithmic}
\end{algorithm}

\textbf{Predictor:}
The predictor first performs singularity avoidance. When a singular point is approached, the predictor moves along the fiber by adding $\beta\sigma$ to $\mu$, jumping to another analytic curve segment that bypasses the singular point. It then moves in the tangent direction given by differential equation~\eqref{equ_diff} while reducing $\mu$ geometrically.

\textbf{Corrector:}
The corrector solves the fixed-point bundle equation $G(\sigma,\mu)=0$ for a fixed $\mu$. In interior point methods, perturbed KKT conditions and barrier problems are derived from the same original problem. However, our perturbed KKT conditions and barrier problem do not entirely align, so we derive two different but deeply related correctors from them. In the following subsections, we introduce two regularized Newton correctors for practical use, and we prove the convergence result of the overall algorithm with the standard Newton corrector.

\textbf{Stopping criterion:}
The outer iteration terminates when $\mathbf{1}^\top\tilde{\mu}(\sigma)$ falls below a prescribed tolerance $\epsilon$, which guarantees precision on the gap of ${\rm VI}(F,\Delta)$. During the predictor steps, we enforce $\min_i\mu_i\geq\epsilon/n$, so that $\lVert G(\sigma,\mu)\rVert\leq\epsilon/n$ in the inner iteration suffices to imply $\lvert G(\sigma,\mu)\rvert\leq\mu$ in the corrector loop. When $\mathbf{1}^\top\mu$ falls below $\epsilon$, Theorem~\ref{thm_section} guarantees $\mathbf{1}^\top\tilde{\mu}(\sigma)\leq\epsilon$.

We now introduce two different correctors. One is used to prove the convergence result of the overall algorithm. The other is an alternative for practical use.

\subsection{Corrector derived from perturbed KKT conditions}

By perturbed KKT conditions, we mean the fixed-point bundle equation $G(\sigma,\mu)=0$ that is equivalent to FPKKT~\eqref{equ_fpkkt}. The Newton equation of $G(\sigma,\mu)=0$ is derived as follows, where $G'_\theta(\sigma,\mu)d\theta$ was obtained in differential equation~\eqref{equ_diff}.
\begin{align*}
  G'_\theta(\sigma,\mu)d\theta=                                              & -G(\sigma,\mu)                                                   \\
  \sigma\circ\left(J(\sigma)+(\mathbf{1}^\top\mu)I\right)\overline{d\theta}= & -\left(I-\sigma\mathbf{1}^\top\right)(\sigma\circ F(\sigma)-\mu)
\end{align*}
Multiplying both sides by $1/\sigma$, we obtain the Newton equation of $G(\sigma,\mu)=0$ in equation~\eqref{equ_newton}.
\begin{equation}
  \label{equ_newton}
  \left(J(\sigma)+(\mathbf{1}^\top\mu)I\right)\overline{d\theta}=-\left(I-\mathbf{1}\sigma^\top\right)(F(\sigma)-\mu/\sigma)
\end{equation}

Denote $\tilde{G}(\sigma,\mu)=G(\sigma,\mu)/\sigma$, the term appearing in the right-hand side of Newton equation~\eqref{equ_newton}. Assumming $F\in C^0(p_\Delta(\mathcal{N}_{\tilde{\Gamma}}(\zeta)))$, $\tilde{G}(\sigma,\mu)$ is bounded on the neighborhood $\mathcal{N}_{\tilde{\Gamma}}(\zeta)$, because $I-\mathbf{1}\sigma^\top$ and $F(\sigma)$ are bounded on the compact $p_\Delta(\mathcal{N}_{\tilde{\Gamma}}(\zeta))$ by continuity, and $\mu/\sigma$ is bounded on $\mathcal{N}_{\tilde{\Gamma}}(\zeta)$ by Lemma~\ref{thm_bounds}. The first derivative of $\tilde{G}(\sigma,\mu)$ is derived as follows.
\begin{align*}
  d\tilde{G}(\sigma,\mu)= & d\frac{G(\sigma,\mu)}{\sigma}=\frac{1}{\sigma}\circ\frac{\partial G(\sigma,\mu)}{\partial\sigma}\circ\sigma\circ\overline{d\theta}-G(\sigma,\mu)\circ(1/\sigma^2)\circ\sigma\circ\overline{d\theta} \\
  =                       & \left(J(\sigma)+(\mathbf{1}^\top\mu)I\right)\overline{d\theta}-\tilde{G}(\sigma,\mu)\circ\overline{d\theta}
\end{align*}
Assumming $F\in C^1(p_\Delta(\mathcal{N}_{\tilde{\Gamma}}(\zeta)))$, $\tilde{G}'(\sigma,\mu)$ is bounded on $\mathcal{N}_{\tilde{\Gamma}}(\zeta)$, because $J(\sigma)$ is bounded on the compact $p_\Delta(\mathcal{N}_{\tilde{\Gamma}}(\zeta))$ by continuity, and $\mathbf{1}^\top\mu$ and $\tilde{G}(\sigma,\mu)$ are bounded on $\mathcal{N}_{\tilde{\Gamma}}(\zeta)$. The second derivative has the following bound.
\begin{align*}
  \left\lVert\tilde{G}''(\sigma,\mu)\right\rVert & =\left\lVert J'(\sigma)-\left(\tilde{G}(\sigma,\mu)\circ I\right)'\right\rVert\leq\left\lVert J'(\sigma)\right\rVert+\left\lVert\tilde{G}'(\sigma,\mu)\right\rVert                  \\
                                                 & \leq                                           \left\lVert J'(\sigma)\right\rVert+\left\lVert J(\sigma)\right\rVert+\mathbf{1}^\top\mu+\left\lVert\tilde{G}(\sigma,\mu)\right\rVert
\end{align*}
Assumming $F\in C^2(p_\Delta(\mathcal{N}_{\tilde{\Gamma}}(\zeta)))$, $\tilde{G}'(\sigma,\mu)$ is bounded on $\mathcal{N}_{\tilde{\Gamma}}(\zeta)$, because $J'(\sigma)$ and $J(\sigma)$ are bounded on the compact $p_\Delta(\mathcal{N}_{\tilde{\Gamma}}(\zeta))$ by continuity, and $\mathbf{1}^\top\mu$ and $\tilde{G}(\sigma,\mu)$ are bounded on $\mathcal{N}_{\tilde{\Gamma}}(\zeta)$.

We analyze the convergence rate of the predictor-corrector path-following framework with the standard Newton corrector~\eqref{equ_newton}. To prove the convergence rate, we prove the convergence rate of the corrector subiteration and show that each predictor step does not leave the corrector convergence region. This involves proving a uniform bound on the contraction of $\tilde{G}(\sigma,\mu)$ under corrector subiterations and a uniform bound on the difference of $\tilde{G}(\sigma,\mu)$ due to predictor steps.

Adapting the standard quadratic convergence proof of the Newton iteration \cite{quadratic_proof}, we have the following proposition stating that $\tilde{G}(\sigma,\mu)$ achieves uniform quadratic convergence to $0$ under the corrector.
\begin{proposition}
  \label{thm_corrector}
  Let $\theta_{k+1}=\theta_k+\overline{d\theta}_k$, where $\overline{d\theta}_k$ is given by Newton equation~\eqref{equ_newton}. Assume that $F\in C^2(p_\Delta(\mathcal{N}_{\tilde{\Gamma}}(\zeta)))$. Then for any $(\sigma_k,\mu)\in\mathcal{N}_{\tilde{\Gamma}}(\zeta)$, there exists $C_1>0$ such that
  \begin{equation}
    \left\lVert\tilde{G}(\sigma_{k+1},\mu)\right\rVert\leq C_1\left\lVert\tilde{G}(\sigma_k,\mu)\right\rVert ^2.
  \end{equation}
\end{proposition}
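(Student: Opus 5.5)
We will adapt the standard Newton argument, with $\theta$ (modulo $\mathbf{1}$) as the variable and $\tilde{G}(\cdot,\mu)$ as the map. Write $\sigma(t)={\rm softmax}(\theta_k+t\,\overline{d\theta}_k)$ for $t\in[0,1]$, so that $\sigma(0)=\sigma_k$ and $\sigma(1)=\sigma_{k+1}$.

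The first step is a Taylor expansion with integral remainder along this segment:
$$\tilde{G}(\sigma_{k+1},\mu)=\tilde{G}(\sigma_k,\mu)+\tilde{G}'(\sigma_k,\mu)\overline{d\theta}_k+\int_0^1(1-t)\,\tilde{G}''(\sigma(t),\mu)[\overline{d\theta}_k,\overline{d\theta}_k]\,dt .$$
By the derivative formula derived just before the statement, $\tilde{G}'(\sigma_k,\mu)\overline{d\theta}_k=J_G\overline{d\theta}_k-\tilde{G}(\sigma_k,\mu)\circ\overline{d\theta}_k$.

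The Newton equation~\eqref{equ_newton} says $J_G\overline{d\theta}_k=-(I-\mathbf{1}\sigma_k^\top)\tilde{G}(\sigma_k,\mu)$. We also have $\sigma_k^\top\tilde{G}(\sigma_k,\mu)=\mathbf{1}^\top G(\sigma_k,\mu)=0$, since $\mathbf{1}^\top(I-\sigma\mathbf{1}^\top)=0$. Hence $(I-\mathbf{1}\sigma_k^\top)\tilde{G}=\tilde{G}$, and the linear terms cancel exactly.

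What remains is
$$\tilde{G}(\sigma_{k+1},\mu)=-\tilde{G}(\sigma_k,\mu)\circ\overline{d\theta}_k+\text{remainder}.$$
Both terms are quadratic, because the step itself is linear in the residual:
$$\lVert\overline{d\theta}_k\rVert\le\lVert\tilde{G}(\sigma_k,\mu)\rVert\,/\,s_{\min}(J_G).$$
Here Lemma~\ref{thm_bounds}(i) supplies the uniform lower bound $s_{\min}(J_G)>0$ on $\mathcal{N}_{\tilde{\Gamma}}(\zeta)$. So
$$\lVert\tilde{G}(\sigma_{k+1},\mu)\rVert\le\Big(\frac{1}{s_{\min}(J_G)}+\frac{M_2}{2\,s_{\min}(J_G)^2}\Big)\lVert\tilde{G}(\sigma_k,\mu)\rVert^2,$$
where $M_2$ bounds $\lVert\tilde{G}''\rVert$. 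This $M_2$ is available under $F\in C^2$ from the bound displayed before the statement, using Lemma~\ref{thm_bounds}(ii) and compactness. This gives $C_1$.

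The main obstacle is that $M_2$ is only guaranteed on $\mathcal{N}_{\tilde{\Gamma}}(\zeta)$, so the whole segment $\sigma(t)$ must stay inside that set. I would handle this as follows.
\begin{itemize}
\item First, either shrink the radius of the neighborhood or restrict to points with sufficiently small residual. The natural way to do this is to pass to a slightly larger compact set, for instance $\mathcal{N}_{\tilde{\Gamma}}(\zeta)$ with radius $2d$. For small enough $d$ this set is still disjoint from $S$ by Proposition~\ref{thm_neighborhood}, so the same bounds hold on it with possibly worse constants.
\item Then note that $\lVert\overline{d\theta}_k\rVert\le\lVert\tilde{G}\rVert/s_{\min}(J_G)$, and that $\tilde{G}$ is uniformly bounded on the compact set.
\item Where the step is too large to stay in the enlarged set, fall back on the trivial inequality: $\lVert\tilde{G}(\sigma_{k+1},\mu)\rVert$ is bounded by a constant, which is itself at most a constant times $\lVert\tilde{G}(\sigma_k,\mu)\rVert^2$ whenever $\lVert\tilde{G}(\sigma_k,\mu)\rVert$ is bounded away from zero.
\end{itemize}
Taking $C_1$ as the maximum of the two constants yields a uniform bound valid on all of $\mathcal{N}_{\tilde{\Gamma}}(\zeta)$.
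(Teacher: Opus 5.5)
Your proposal is correct and follows the paper's proof essentially step for step. You use the same Taylor expansion in $\theta$ and the identity $\tilde{G}'=J_G-\tilde{G}\circ I$, so the Newton equation cancels the linear term and leaves $-\tilde{G}\circ\overline{d\theta}_k$ plus the quadratic remainder. With the same step bound via $s_{\min}(J_G)$ you get the paper's constant $C_1=1/s_{\min}(J_G)+M_2/(2s_{\min}^2(J_G))$, where your $M_2$ is the paper's $L_1$. Your extra care that the segment $\sigma(t)$ stays where $\tilde{G}''$ is bounded addresses a point the paper passes over silently. You enlarge the radius, and when the step exceeds $d$ (which forces $\lVert\tilde{G}(\sigma_k,\mu)\rVert> d\,s_{\min}(J_G)$) you fall back on the trivial bound. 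This costs only needing $F\in C^2$ slightly beyond $p_\Delta(\mathcal{N}_{\tilde{\Gamma}}(\zeta))$, which holds automatically in the paper's analytic setting.
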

\begin{proof}
  We have the following derivation. In the third and fourth lines, we use the fundamental theorem of calculus to substitute the residuals, where $\sigma_u={\rm softmax}(\theta_k+u(\theta_{k+1}-\theta_k))$ and $\sigma_w={\rm softmax}(\theta_u+w(\theta_u-\theta_k))$. In the fifth line, we substitute $\theta_{k+1}-\theta_k$ with $\overline{d\theta}_k$. The constant $L_1$ in the last line follows from the bound of the second derivative $\tilde{G}''(\sigma,\mu)$.
  \begin{align*}
         & \left\lVert\tilde{G}(\sigma_{k+1},\mu)\right\rVert=\left\lVert\tilde{G}(\sigma_{k+1},\mu)-\tilde{G}(\sigma_k,\mu)-\left(J(\sigma_k)+(\mathbf{1}^\top\mu) I\right)(\theta_{k+1}-\theta_k)\right\rVert                                  \\
    =    & \left\lVert\tilde{G}(\sigma_{k+1},\mu)-\tilde{G}(\sigma_k,\mu)-\left(\tilde{G}'(\sigma_k,\mu)+\tilde{G}(\sigma_k,\mu)\circ I\right)(\theta_{k+1}-\theta_k)\right\rVert                                                                \\
    \leq & \int_{0}^{1}\left\lVert\tilde{G}'(\sigma_u,\mu)-\tilde{G}'(\sigma_k,\mu)\right\rVert \left\lVert\theta_{k+1}-\theta_k\right\rVert \,du+\left\lVert\tilde{G}(\sigma_k,\mu)\right\rVert\left\lVert\theta_{k+1}-\theta_k\right\rVert     \\
    \leq & \int_{0}^{1}\left\lVert\tilde{G}''(\sigma_w,\mu)\right\rVert u\left\lVert\theta_{k+1}-\theta_k\right\rVert ^2\,du+\left\lVert\tilde{G}(\sigma_k,\mu)\right\rVert\left\lVert\theta_{k+1}-\theta_k\right\rVert                          \\
    \leq & \left(\frac{1}{2}L_1\left\lVert\left(J(\sigma_k)+(\mathbf{1}^\top\mu) I\right)^{-1}\right\rVert+1\right) \left\lVert\left(J(\sigma_k)+(\mathbf{1}^\top\mu) I\right)^{-1}\right\rVert\left\lVert\tilde{G}(\sigma_k,\mu)\right\rVert ^2
  \end{align*}

  In the above, $\lVert(J(\sigma)+(\mathbf{1}^\top\mu) I)^{-1}\rVert$ is uniformly bounded by $1/s_{\min}(J_G)$ on $\mathcal{N}_{\tilde{\Gamma}}(\zeta)$. The constant $L_1$ is uniformly bounded by the bound of $\tilde{G}''(\sigma,\mu)$ on $\mathcal{N}_{\tilde{\Gamma}}(\zeta)$. This proves the inequality.
\end{proof}

For the predictor step, we have the following proposition stating that the difference in $\tilde{G}(\sigma,\mu)$ due to the predictor is uniformly bounded with step size $\eta$ as coefficient.
\begin{proposition}
  \label{thm_predictor}
  Let $\mu_{t+1}=(1-\eta)\mu_t$ and $\theta_{t+1}=\theta_t+\overline{d\theta}_t$, where $\eta\in(0,1)$, and $\overline{d\theta}_t$ is given by differential equation~\eqref{equ_diff} with $d\mu_t=-\eta\mu_t$. Assume that $F\in C^1(p_\Delta(\mathcal{N}_{\tilde{\Gamma}}(\zeta)))$. Then for any $(\sigma_t,\mu_t)\in\mathcal{N}_{\tilde{\Gamma}}(\zeta)$, there exists $C_2>0$ such that
  \begin{equation}
    \left\lVert\tilde{G}(\sigma_{t+1},\mu_{t+1})-\tilde{G}(\sigma_t,\mu_t)\right\rVert\leq C_2\eta.
  \end{equation}
\end{proposition}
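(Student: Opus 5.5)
We write $\tilde{G}(\sigma_{t+1},\mu_{t+1})-\tilde{G}(\sigma_t,\mu_t)$ as an integral along the straight segment in the joint variables $(\theta,\mu)$ and use the uniform bounds on $\mathcal{N}_{\tilde{\Gamma}}(\zeta)$. Set $\theta_s=\theta_t+s\overline{d\theta}_t$, $\sigma_s={\rm softmax}(\theta_s)$ and $\mu_s=\mu_t-s\eta\mu_t$ for $s\in[0,1]$. By the fundamental theorem of calculus,
$$\tilde{G}(\sigma_{t+1},\mu_{t+1})-\tilde{G}(\sigma_t,\mu_t)=\int_0^1\left(\tilde{G}'_\theta(\sigma_s,\mu_s)\overline{d\theta}_t+\tilde{G}'_\mu(\sigma_s,\mu_s)(-\eta\mu_t)\right)ds.$$
The $\theta$-derivative is the one computed before Proposition~\ref{thm_corrector}, namely $(J(\sigma)+(\mathbf{1}^\top\mu)I)-\tilde{G}(\sigma,\mu)\circ I$. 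The $\mu$-derivative follows directly from $\tilde{G}=(I-\sigma\mathbf{1}^\top)(\sigma\circ F(\sigma)-\mu)/\sigma$: it is the linear map $d\mu\mapsto -(I-\mathbf{1}\sigma^\top)(d\mu/\sigma)$, up to the $\mathbf{1}^\top d\mu$ term that comes from the identity $\sigma\circ\overline{d\theta}\,\mathbf{1}^\top\mu$ in the derivation of~\eqref{equ_diff}. In both cases the norms are bounded by continuous functions of $(\sigma,\mu)$ and by $\lVert\mu/\sigma\rVert$.

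Next we bound the step sizes linearly in $\eta$. From~\eqref{equ_diff} with $d\mu_t=-\eta\mu_t$ we get
$$\lVert\overline{d\theta}_t\rVert\leq\eta\,\lVert (J(\sigma_t)+(\mathbf{1}^\top\mu_t)I)^{-1}\rVert\,\lVert I-\mathbf{1}\sigma_t^\top\rVert\,\lVert\mu_t/\sigma_t\rVert\leq\eta\,\frac{C}{s_{\min}(J_G)}.$$
Here Lemma~\ref{thm_bounds} (i) supplies $s_{\min}(J_G)$, and Lemma~\ref{thm_bounds} (ii) bounds $\lVert\mu_t/\sigma_t\rVert$. The norm $\lVert I-\mathbf{1}\sigma^\top\rVert$ is bounded on $\Delta$. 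The $\mu$-increment is $\lVert\eta\mu_t\rVert\leq\eta\,\sup\lVert\mu\rVert$, which is finite by compactness. Inserting these into the integral gives $C_2\eta$, where $C_2$ is a product of the uniform bounds on $\lVert\tilde{G}'\rVert$, $\lVert\mu/\sigma\rVert$, $1/s_{\min}(J_G)$ and $\sup\lVert\mu\rVert$. This uses only $F\in C^1$, consistent with the hypothesis.

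The main obstacle is that the intermediate points $(\sigma_s,\mu_s)$ must stay in a region where these bounds hold. Only the base point $(\sigma_t,\mu_t)$ is assumed to lie in $\mathcal{N}_{\tilde{\Gamma}}(\zeta)$. There are two issues. First, $\sigma_s$ differs from $\sigma_t$ by $s\overline{d\theta}_t$ in $\ln\sigma$ coordinates. Second, $\mu_s=(1-s\eta)\mu_t$ is componentwise comparable to $\mu_t$, so $\lVert\mu_s/\sigma_s\rVert\leq\lVert\mu_t/\sigma_t\rVert\,\mathrm{e}^{\lVert\overline{d\theta}_t\rVert}$. To handle this, we enlarge the neighborhood slightly: take $\mathcal{N}_{\tilde{\Gamma}}(\zeta)$ with radius $d$ and run the estimates on the compact, still nonsingular neighborhood of radius $2d$. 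This is possible by Proposition~\ref{thm_neighborhood} after shrinking $d$. We then restrict to $\eta\leq\eta_0$ with $\eta_0 C/s_{\min}(J_G)\leq d$, so the whole segment stays in the enlarged neighborhood. For $\eta>\eta_0$, the claimed bound holds trivially: $\tilde{G}$ is bounded on the enlarged compact set, so $\lVert\Delta\tilde{G}\rVert\leq 2\sup\lVert\tilde{G}\rVert\leq(2\sup\lVert\tilde{G}\rVert/\eta_0)\eta$. We then take $C_2$ to be the maximum of the two constants.
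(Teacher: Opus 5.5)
Your core argument is the paper's. You use the fundamental theorem of calculus along the step, read off $\lVert\overline{d\theta}_t\rVert\le\eta\,\lVert J_G^{-1}\rVert\,\lVert I-\mathbf{1}\sigma_t^\top\rVert\,\lVert\mu_t/\sigma_t\rVert$ from \eqref{equ_diff}, and close with $1/s_{\min}(J_G)$ and Lemma~\ref{thm_bounds}. The only structural difference is the decomposition. The paper splits the increment into a $\theta$-leg at fixed $\mu_{t+1}$ and a $\mu$-leg at fixed $\sigma_t$. Since $\tilde{G}$ is affine in $\mu$, the second leg is exactly $\eta(I-\mathbf{1}\sigma_t^\top)(\mu_t/\sigma_t)$. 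Your joint segment instead evaluates the $\mu$-derivative at $\sigma_s$, so it needs your comparison of $\sigma_s$ with $\sigma_t$. Because of the softmax renormalization, that factor is $\mathrm{e}^{2\lVert\overline{d\theta}_t\rVert}$ rather than $\mathrm{e}^{\lVert\overline{d\theta}_t\rVert}$, which is harmless. Also, $\tilde{G}'_\mu$ is exactly $d\mu\mapsto-(I-\mathbf{1}\sigma^\top)(d\mu/\sigma)$ with no correction term; the $\mathbf{1}^\top\mu$ term in \eqref{equ_diff} comes from the $\theta$-derivative.

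Your last paragraph goes beyond the paper. The paper's proof uses the bound on $\tilde{G}'$ at $(\sigma_u,\mu_{t+1})$ without checking that these points lie in $\mathcal{N}_{\tilde{\Gamma}}(\zeta)$. Two steps of your treatment need repair.

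First, do not bound the $\mu$-term by $\lVert\tilde{G}'_\mu\rVert\,\eta\sup\lVert\mu\rVert$. The operator norm contains ${\rm diag}(1/\sigma)$ and is unbounded on $\mathcal{N}_{\tilde{\Gamma}}(0)$ whenever $\sigma^\star$ lies on the boundary of $\Delta$; this is why the paper only ever uses $\lVert\mu/\sigma\rVert$. Apply the map to $\mu_t$ instead and use Lemma~\ref{thm_bounds}(ii) with your comparison; then $\sup\lVert\mu\rVert$ disappears.

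Second, for $\eta>\eta_0$ the endpoint need not lie in the radius-$2d$ set, because $\lVert\overline{d\theta}_t\rVert$ may be as large as $C/s_{\min}(J_G)\gg d$. So ``$\tilde{G}$ is bounded on the enlarged compact set'' does not apply. The trivial bound still holds for a different reason: $\lVert\mu_{t+1}/\sigma_{t+1}\rVert\le\lVert\mu_t/\sigma_t\rVert\,\mathrm{e}^{2C/s_{\min}(J_G)}$, and $F$ is continuous on all of $\Delta$, so $\lVert\tilde{G}(\sigma_{t+1},\mu_{t+1})\rVert$ is uniformly bounded.

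For $\eta\le\eta_0$, you only need $\sigma_s$ in the projection of the radius-$2d$ set (for $F$ and $J$), not the pair $(\sigma_s,\mu_s)$. That set also need not be nonsingular, since $s_{\min}$ is used only at the base point. What you do need is $F\in C^1$ on that larger projection, which is slightly more than the stated hypothesis. If $F\in C^1(\Delta)$, as for the analytic $F_\epsilon$, the same estimates control every intermediate point for all $\eta\in(0,1)$, and the case split is unnecessary.
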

\begin{proof}
  We have the following derivation. In the second line, we use the fundamental theorem of calculus to substitute the residual, where $\sigma_u={\rm softmax}(\theta_t+u(\theta_{t+1}-\theta_t))$. In the third line, we substitute $\theta_{t+1}-\theta_t$ with $\overline{d\theta}_t$. The constant $L_2$ in the last line follows from the bound of the first derivative $\tilde{G}'(\sigma,\mu)$.
  \begin{align*}
         & \left\lVert\tilde{G}(\sigma_{t+1},\mu_{t+1})-\tilde{G}(\sigma_t,\mu_t)\right\rVert\leq\left\lVert\tilde{G}(\sigma_{t+1},\mu_{t+1})-\tilde{G}(\sigma_t,\mu_{t+1})\right\rVert+\left\lVert\tilde{G}(\sigma_t,\mu_{t+1})-\tilde{G}(\sigma_t,\mu_t)\right\rVert \\
    \leq & \int_{0}^{1}\left\lVert\tilde{G}'(\sigma_u,\mu_{t+1})\right\rVert\left\lVert\theta_{t+1}-\theta_t\right\rVert \,du+\left\lVert\left(I-\mathbf{1}\sigma_t^\top\right)(d\mu_t/\sigma_t)\right\rVert                                                           \\
    \leq & \left(L_2\left\lVert\left(J(\sigma_t)+(\mathbf{1}^\top\mu_t)I\right)^{-1}\right\rVert+1\right) \left\lVert I-\mathbf{1}\sigma_t^\top\right\rVert\eta\left\lVert \mu_t/\sigma_t\right\rVert
  \end{align*}

  In the above, $\lVert(J(\sigma)+(\mathbf{1}^\top\mu) I)^{-1}\rVert$ is uniformly bounded by $1/s_{\min}(J_G)$ on $\mathcal{N}_{\tilde{\Gamma}}(\zeta)$. The constant $L_2$ is uniformly bounded by the bound of $\tilde{G}'(\sigma,\mu)$ on $\mathcal{N}_{\tilde{\Gamma}}(\zeta)$. The term $\lVert I-\mathbf{1}\sigma^\top\rVert$ is bounded on the compact $p_\Delta(\mathcal{N}_{\tilde{\Gamma}}(\zeta))$ by continuity. The term $\lVert\mu/\sigma\rVert$ is bounded on $\mathcal{N}_{\tilde{\Gamma}}(\zeta)$ by Lemma~\ref{thm_bounds}. This proves the inequality.
\end{proof}

In conclusion, assuming $F\in C^2(p_\Delta(\mathcal{N}_{\tilde{\Gamma}}(\zeta)))$, for the predictor-corrector path-following algorithm with corrector steps in Proposition~\ref{thm_corrector} and predictor steps in Proposition~\ref{thm_predictor}:
(1) $\tilde{G}(\sigma,\mu)$ achieves uniform quadratic convergence to $0$ under corrector subiterations,
(2) there exists $\eta\in(0,1)$ such that $\tilde{G}(\sigma,\mu)$ remains in the convergence region of the corrector after each predictor step,
and (3) $\mathbf{1}^\top\mu$ achieves linear reduction to $\zeta$ under the overall iteration.

Therefore, the predictor-corrector algorithm achieves global linear convergence along the path $\tilde{\Gamma}$ to the ending point where $\mathbf{1}^\top\mu=\zeta$. For a nonsingular solution $(\sigma^\star,\mathbf{0})$, path-following ends exactly at $(\sigma^\star,\mathbf{0})$ where $\mathbf{1}^\top\mu=0$, and the algorithm achieves global linear convergence. For a singular solution $(\sigma^\star,\mathbf{0})$, path-following ends before $(\sigma^\star,\mathbf{0})$ at $\mathbf{1}^\top\mu=\zeta>0$, and the algorithm retains global linear reduction up to a prescribed precision $\zeta>0$. In the singular case, the algorithm does not maintain global linear reduction to $(\sigma^\star,\mathbf{0})$. Convergence reduces to sublinear as the required precision increases, because any convergent process is trivially at least sublinear.

The standard Newton iteration is simple for theoretical analysis, but its performance in practice is not ideal. The step length is affected by the smallest singular value $s_{\min}(J_G)$, making the iteration appear unstable if $s_{\min}(J_G)$ is too small. A classical remedy is the regularized Newton equation~\eqref{equ_regu_kkt}, derived from standard Newton equation~\eqref{equ_newton} by multiplying both sides by $J_G^\top$ and adding the regularization term $\delta_H I$ with $\delta_H>0$. The coefficient becomes positive semidefinite and controlled by $\delta_H$. Indeed, the smallest singular value of the coefficient is now $s_{\min}^2(J_G)+\delta_H$, and the step length can be controlled with a mild $\delta_H$. However, convergence reduces from quadratic to linear with rate $\delta_H/(s_{\min}^2(J_G)+\delta_H)$. Thus regularization trades convergence speed for iteration stability.
\begin{equation}
  \label{equ_regu_kkt}
  \begin{aligned}
    \left(J_G^\top J_G+\delta_H I\right)d\theta= & -J_G^\top\tilde{G}(\sigma,\mu)              \\
    \overline{d\theta}=                          & \left(I-\mathbf{1}\sigma^\top\right)d\theta \\
  \end{aligned}
\end{equation}

Regularized Newton equation~\eqref{equ_regu_kkt} is approximately the regularized Gauss-Newton equation of the least square problem~\eqref{equ_square_kkt} in terms of the Euclidean norm of $\tilde{G}(\sigma,\mu)$, which is equivalently the local norm of $G(\sigma,\mu)$. Since the local norm is induced by the Hessian matrix ${\rm diag}(1/\sigma^2)$ of the log barrier $-\ln\sigma$, problem~\eqref{equ_square_kkt} is actually a log barrier problem. Thus gradient descent with its inexact gradient $J_G^\top\tilde{G}(\sigma,\mu)$ works as a corrector solving $G(\sigma,\mu)=0$ as well. Regularized Newton equation~\eqref{equ_regu_kkt} mixes Newton and gradient steps. It reduces to the standard Newton step~\eqref{equ_newton} if $\delta_H=0$, and approximates the gradient step $-(1/\delta_H)J_G^\top\tilde{G}(\sigma,\mu)$ if $\delta_H$ is large.
\begin{equation}
  \label{equ_square_kkt}
  \min_\sigma\frac{1}{2}\left\lVert\tilde{G}(\sigma,\mu)\right\rVert=\min_\sigma\frac{1}{2}\left\lVert G(\sigma,\mu)\right\rVert _{{\rm diag}(1/\sigma^2)}
\end{equation}

\subsection{Corrector derived from barrier problem}

Since the fixed-point bundle is also a special central path of the MCP~\eqref{equ_mcp}, we can also derive a gradient from the barrier MCP~\eqref{equ_mcp} for the corrector. First, we additionally require that $(r,v)$ is subject to the Brouwer function $(\hat{\sigma},r,v)=M(\sigma,\mu)$. Then $\sigma$ becomes the only optimization variable of barrier MCP~\eqref{equ_mcp}, with $(r,v)$ as an intermediate variable. The gradient is taken only with respect to $d\sigma$, and hence only with respect to $\overline{d\theta}$. Additionally, this enables us to use $\mathbf{1}^\top(\sigma-\hat{\sigma})=0$ to eliminate $dv$ in the differential.

The gradient of barrier MCP~\eqref{equ_mcp} is derived as follows. In the third line, we use $\mu/r=\hat{\sigma}$, $dv\mathbf{1}^\top(\sigma-\hat{\sigma})=0$, and $d\sigma/\sigma=\overline{d\theta}$. In the third line, we also use $(I-\sigma\mathbf{1}^\top)(\sigma-\hat{\sigma})=(\sigma-\hat{\sigma})$ and $\overline{d\theta}=(I-\mathbf{1}\sigma^\top)\overline{d\theta}$, and we use them again in the fifth line to reverse it. In the fourth line, we isolate $J(\sigma)$ and use $k\sigma\mathbf{1}^\top(\sigma-\hat{\sigma})=0$ in it. In the last line, we use $r=F(\sigma)+v\mathbf{1}$ and $\hat{\sigma}^\top F(\sigma)+v=\mathbf{1}^\top\mu$.
\begin{align*}
    & d\left(\sigma^\top r-\mu^\top\ln\sigma-\mu^\top \ln r\right)=\left(\sigma-\mu/r\right)^\top dr+\left(r-\mu/\sigma\right)^\top d\sigma                                                                                       \\
  = & \left(\sigma-\mu/r\right)^\top \left(\frac{\partial F(\sigma)}{\partial\sigma}\circ\sigma\circ\frac{d\sigma}{\sigma}+dv\mathbf{1}\right) +\left(\sigma-\mu/r\right)^\top\left(r\circ\frac{d\sigma}{\sigma}\right)           \\
  = & \left(\sigma-\hat{\sigma}\right)^\top\left(I-\mathbf{1}\sigma^\top\right)\left(\frac{\partial F(\sigma)}{\partial\sigma}\circ\sigma+r\circ I\right)\left(I-\mathbf{1}\sigma^\top\right)\overline{d\theta}                   \\
  = & \left(\sigma-\hat{\sigma}\right)^\top\left(J(\sigma)+\left(I-\mathbf{1}\sigma^\top\right)\left(\left(r-(I-\mathbf{1}\sigma^\top)F(\sigma)\right)\circ I\right)\left(I-\mathbf{1}\sigma^\top\right)\right)\overline{d\theta} \\
  = & \left(\sigma-\hat{\sigma}\right)^\top \left(J(\sigma)+\left(r-(I-\mathbf{1}\sigma^\top)F(\sigma)\right)\circ I\right)\overline{d\theta}                                                                                     \\
  = & \left(\sigma-\hat{\sigma}\right)^\top \left(J(\sigma)+\left(\mathbf{1}^\top\mu+(\sigma-\hat{\sigma})^\top F(\sigma)\right) I\right)\overline{d\theta}
\end{align*}
Ignoring the higher-order term $((\sigma-\hat{\sigma})^\top F(\sigma))(\sigma-\hat{\sigma})$, we obtain the inexact gradient of MCP~\eqref{equ_mcp} in equation~\eqref{equ_grad}.
\begin{equation}
  \label{equ_grad}
  \nabla\left(\sigma^\top r-\mu^\top\ln\sigma-\mu^\top \ln r\right)=\left(J(\sigma)+(\mathbf{1}^\top\mu) I\right)^\top\left(\sigma-\hat{\sigma}\right)
\end{equation}

Next we derive a regularized Newton equation from standard Newton equation~\eqref{equ_newton} with gradient $J_G^\top(\sigma-\hat{\sigma})$ on the right-hand side. The derivation is as follows. In the first line, denote $v_g=\sigma^\top F(\sigma)-\mathbf{1}^\top\mu$ such that $\sigma\circ F(\sigma)+v_g\sigma-\mu=G(\sigma,\mu)$. In the second line, denote $(\hat{\sigma},r,v_b)=M(\sigma,\mu)$ such that $\hat{\sigma}\circ(F(\sigma)+v_b\mathbf{1})=\mu$. In the third line, denote $v_{gb}=(v_g-v_b)/(\mathbf{1}^\top\mu+k)$ and use $J(\sigma)\mathbf{1}=k\mathbf{1}$.
\begin{align*}
  \sigma\circ\left(J(\sigma)+(\mathbf{1}^\top\mu)I\right)\overline{d\theta}                                                                                               & =-\left(\sigma\circ F(\sigma)+v_g\sigma-\mu\right)                                  \\
  \sigma\circ\left(J(\sigma)+(\mathbf{1}^\top\mu)I\right)\overline{d\theta}+(v_g-v_b)\sigma                                                                               & =-\left(\sigma\circ F(\sigma)+v_b\sigma-\mu\right)                                  \\
  \sigma\circ\left(J(\sigma)+(\mathbf{1}^\top\mu)I\right)\left(\overline{d\theta}+v_{gb}\mathbf{1}\right)                                                                 & =-\left(\sigma\circ F(\sigma)+v_b\sigma-\mu\right)                                  \\
  \frac{\sigma}{r}\circ\left(J(\sigma)+(\mathbf{1}^\top\mu)I\right)\left(\overline{d\theta}+v_{gb}\mathbf{1}\right)                                                       & =-\left(\sigma-\hat{\sigma}\right)                                                  \\
  \left(J(\sigma)+(\mathbf{1}^\top\mu)I\right)^\top\circ\frac{\sigma}{r}\circ\left(J(\sigma)+(\mathbf{1}^\top\mu)I\right)\left(\overline{d\theta}+v_{gb}\mathbf{1}\right) & =-\left(J(\sigma)+(\mathbf{1}^\top\mu)I\right)^\top\left(\sigma-\hat{\sigma}\right)
\end{align*}

This yields the regularized Newton equation~\eqref{equ_regu_barr}. Similar to regularized Newton equation~\eqref{equ_regu_kkt}, regularized Newton equation~\eqref{equ_regu_barr} mixes Newton and gradient steps. It reduces to the standard Newton step~\eqref{equ_newton} if $\delta_H=0$, and approximates the gradient step $-(1/\delta_H)J_G^\top(\sigma-\hat{\sigma})$ if $\delta_H$ is large.
\begin{equation}
  \label{equ_regu_barr}
  \begin{aligned}
    \left(J_G^\top\circ\frac{\sigma}{r}\circ J_G+\delta_H I\right)d\theta= & -J_G^\top\left(\sigma-\hat{\sigma}\right)   \\
    \overline{d\theta}=                                                    & \left(I-\mathbf{1}\sigma^\top\right)d\theta \\
  \end{aligned}
\end{equation}

Similar to regularized Newton equation~\eqref{equ_regu_kkt}, regularized Newton equation~\eqref{equ_regu_barr} also corresponds to a least square problem~\eqref{equ_square_barr} with local norm. It can be verified that $J_G^\top(\sigma-\hat{\sigma})$ is the inexact gradient of $(\sigma-\mu/r)^\top(r-\mu/\sigma)$ via a derivation similar to that of barrier MCP~\eqref{equ_mcp}.
\begin{equation}
  \label{equ_square_barr}
  \min_\sigma\left\lVert \sigma\circ r-\mu\right\rVert_{{\rm diag}(1/(\sigma\circ r))}=\min_\sigma\left(\sigma-\mu/r\right)^\top\left(r-\mu/\sigma\right)
\end{equation}

\section{Experiments}

We test the algorithm on randomly generated instances of ${\rm VI}(F,\Delta)$ with dimensions ranging from 3 to 800. The map $F$ is a neural network with architecture $[n,50,n]$ and tanh activations, hence real analytic. Its derivative is computed by backpropagation. The starting point $\sigma_{\rm init}$ is also randomly generated, and the precision is $\epsilon=10^{-5}$. For each dimension, we run 1000 instances, except dimensions 400 and 800, for which we use 100 instances each. Table~\ref{table} reports the mean and median iteration counts for two variants of the regularized Newton corrector, equation~\eqref{equ_regu_kkt} and equation~\eqref{equ_regu_barr}.

\begin{table}[htbp]
  \centering
  \caption{Iteration numbers (mean / median) solving ${\rm VI}(F,\Delta)$}
  \label{table}
  \begin{tabular}{lcc}
    \toprule
    Dimension      & Regularized Newton~\eqref{equ_regu_kkt} & Regularized Newton~\eqref{equ_regu_barr} \\
    \midrule
    $[3,50,3]$     & 331 / 329                               & 347 / 342                                \\
    $[6,50,6]$     & 362 / 339                               & 379 / 356                                \\
    $[12,50,12]$   & 372 / 347                               & 389 / 368                                \\
    $[25,50,25]$   & 390 / 360                               & 402 / 372                                \\
    $[50,50,50]$   & 437 / 370                               & 440 / 380                                \\
    $[100,50,100]$ & 438 / 378                               & 431 / 382                                \\
    $[200,50,200]$ & 539 / 475                               & 539 / 487                                \\
    $[400,50,400]$ & 655 / 528                               & 640 / 546                                \\
    $[800,50,800]$ & 645 / 550                               & 667 / 582                                \\
    \bottomrule
  \end{tabular}
\end{table}

The algorithm succeeds on every one of the 14400 instances. The iteration count increases only mildly with dimension, demonstrating the framework's scalability. The variant based on regularized Newton corrector~\eqref{equ_regu_barr} is slightly slower but more stable, as its mean and median are closer.

Comparing the two regularized Newton equations~\eqref{equ_regu_kkt} and~\eqref{equ_regu_barr}, both reduce to the standard Newton step~\eqref{equ_newton} when $\delta_H=0$, but differ for $\delta_H\neq0$. Equation~\eqref{equ_regu_kkt} corresponds to a least-squares problem with local norm ${\rm diag}(1/\sigma^2)$, which penalizes only $\sigma\to0$. Equation~\eqref{equ_regu_barr} corresponds to local norm ${\rm diag}(1/(\sigma\circ r))$, penalizing both $\sigma\to0$ and $r\to0$. Its Hessian $J_G^\top\circ(\sigma/r)\circ J_G$ vanishes in components as $\sigma\to0$, which is controlled by $\delta_H$, and blows up in components as $r\to0$, which induces cautious steps. Since $\sigma\circ r=\mu$ along the path, $r\to0$ can cause $\sigma$ to blow up under fixed $\mu>0$, whereas $r$ is subject to a function as $\sigma\to0$. Thus regularized Newton~\eqref{equ_regu_barr} is more stable near the boundary but overall slower.

\section{Conclusion}

This paper has introduced a path-following framework for finite-dimensional VIs with general continuous functions and compact convex domains. The framework directly handles smooth VIs on simplices after an approximate reduction.

The central innovation is the fixed-point bundle, a fiber-bundle structure that geometrizes the solution set of the fixed-point KKT system. This system characterizes paths that solve the smooth VIs on simplices. The geometry separates motion along the base from motion along fibers in path-following, unifying starting point selection, path-following, and singularity avoidance in a systematic way.

The path-following algorithm requires no monotonicity or other structural assumptions. It achieves global linear convergence to nonsingular solutions and retains global linear reduction for singular solutions up to any prescribed precision. Numerical experiments on 14400 randomly generated problems up to dimension 800 confirm universal success and only mild growth in iteration count with dimension.

The current framework uses an inner simplex approximation and analytic smoothing. Although theoretically controllable, tighter integration of these preprocessing steps with the path-following phase could further improve practical performance.

\bibliographystyle{unsrt}

\end{document}